\numberwithin{equation}{section}
\newcommand{\Aut}{\mathrm{Aut}}
\newcommand{\mcal}{\mathcal}
\newcommand{\mbb}{\mathbb}
\newcommand{\mA}{\mathcal{A}}
\newcommand{\mB}{\mathcal{B}}
\newcommand{\mC}{\mathcal{C}}
\newcommand{\mD}{\mathcal{D}}
\newcommand{\mP}{\mathcal{P}}
\newcommand{\omin}{\otimes^{\min}}
\newtheorem{theorem}{Theorem}[section]
\newtheorem{lemma}[theorem]{Lemma}
\newtheorem{cor}[theorem]{Corollary}
\newtheorem{definition}[theorem]{Definition}
\newtheorem{proposition}[theorem]{Proposition}
\newtheorem{remark}[theorem]{Remark}
\begin{document}
\title[{\tiny Distances between subalgebras
   of operator algebras}]{
   On stability of distance under some tensor products and some calculations
}

 \author[S Kumar]{Sumit Kumar} \address{School of Physical Sciences, Jawaharlal Nehru University, New Delhi, India}
 \email{sumitkumar.sk809@gmail.com}

\subjclass[2020]{46L05,47L40, 46M05}

\keywords{Subalgebras of operator algebras, Kadison-Kastler distance,
  Christensen distance, Mashood-Taylor distance, injective and
  projective tensor products, Bures topology.  }

\thanks{The author was  supported
  by the Council of Scientific and Industrial Research (Government
  of India) through a Senior Research Fellowship with
  CSIR File No.{\bf 09/0263(12012)/2021-EMR-I}}
\maketitle

\begin{abstract}
We prove that the Kadison-Kastler and Christensen distances are stable
under the Banach space injective tensor product (resp., the
Banach space projective tensor product) of a Banach space with any
unital commutative $C^*$-algebra (resp., of a $C^*$-algebra with any
unital $C^*$-algebra).

Apart from these stability results, we make some explicit calculations
of the Kadison-Kastler, Christensen and Mashood-Taylor distances
between certain subalgebras of some crossed-product operator algebras.
  \end{abstract}
\section{Introduction}
A notion of distance between subalgebras of $B(\mcal{H})$ was first
introduced by Kadison and Kastler in 1973 in \cite{KK}.  This notion
gathered much attention in the 80s and 90s, particularly in the
context of perturbation theory for operator algebras.  Notable
contributions in this area were made by Christensen, Phillips and
Johnson, who employed the Kadison–Kastler distance to establish
several significant results concerning the stability of $C^*$-algebras
and von Neumann algebras under small perturbations.

Recently, Ino and Watatani, in \cite{IW}, used this notion in the
study of inclusions of $C^*$-algebras. Motivated by their work, a
similar perspective was adopted in an earlier work \cite{GK}, wherein
this notion was applied effectively to study the lattice of
intermediate subalgebras of inclusions of $C^*$-algebras and some
concrete calculations were made. Motivated by the calculations made in
\cite{GK} for crossed product $C^*$-algebras, we also initiated the study of
such distances between subalgebras of the spatial tensor product of
$C^*$-algebras, in \cite{GK2}.

An important open problem in the theory of operator algebras concerns
the behaviour of the Kadison–Kastler distance under tensor
products. Specifically, as already mentioned by Christensen et al in
\cite{Ch3}, it is not yet fully understood how the distance between
two $C^*$-algebras relates to the distance between their respective
tensor products with a fixed (nuclear) $C^*$-algebra. That is, given
two $C^*$-algebras $\mathcal{A}$ and $\mathcal{B}$ on some Hilbert
space $\mcal{H}$ with small Kadison–Kastler distance, it remains an
open question whether the tensor products $\mathcal{A} \otimes^{\min}
\mathcal{C}$ and $\mathcal{B} \otimes^{\min} \mathcal{C}$ also remain
close (as subalgebras of $B(\mcal{H})\otimes^{\min} \mcal{C}$) in the
Kadison–Kastler sense, for a fixed nuclear $C^*$-algebra
$\mathcal{C}$. Similar question can be asked for the Christensen
distance as well. In this direction, various attempts were made by
Christensen et al (see, for instance, \cite{Ch1}, \cite{Ch2} and
\cite{Ch3}). (As in \cite{GK, GK2}, we denote the Kadison-Kastler and
Christensen distances by $d_{KK}$ and $d_0$, respectively.) Here are
two of Christensen's noteworthy results in this direction:

\begin{theorem}\cite[Theorem 3.1]{Ch1}\label{Ch1-6kd}
Let $\mC$ be a $C^*$-algebra with $C^*$-subalgebras $\mA$ and $\mB$,
and let $\mD$ be a nuclear $C^*$-algebra. If $\mA$ has property
$D_{k}$ (for some $k\in (0, \infty))$ and $\mA \subseteq_{\gamma}
\mB$, then $\mA \otimes^{\min} \mD \subseteq_{6k\gamma} \mB\otimes^{\min}
\mD$.

In particular, when $\mA$ and $\mB$ both have property $D_{k}$, then
\[
d_{0}(\mA \otimes^{\min} \mD, \mB \otimes^{\min} \mD)\leq 6k d_{0}(\mA,\mB).
\]
\end{theorem}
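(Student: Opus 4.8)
The plan is to localize the tensor-product statement to finite matrix algebras --- which is exactly what nuclearity of $\mD$ buys --- and then to use property $D_k$ to turn the near-inclusion $\mA\subseteq_\gamma\mB$ into a genuine, completely bounded map.

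\emph{Reduction via nuclearity.} The only feature of $\mD$ I would use is the completely positive approximation property: there are nets of completely positive contractions $\phi_\lambda\colon\mD\to M_{n_\lambda}$ and $\psi_\lambda\colon M_{n_\lambda}\to\mD$ with $\psi_\lambda\phi_\lambda\to\mathrm{id}_\mD$ in the point-norm topology. Fixing a faithful representation $\mC\subseteq B(\mH)$, the amplifications $\mathrm{id}\otimes\phi_\lambda$, $\mathrm{id}\otimes\psi_\lambda$ are completely positive contractions between the relevant minimal tensor products and carry $\mA\otimes^{\min}\mD$, $\mB\otimes^{\min}\mD$ into $M_{n_\lambda}(\mA)$, $M_{n_\lambda}(\mB)$ and back. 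So it is enough to show, \emph{uniformly in $n$}, that $M_n(\mA)\subseteq_{6k\gamma}M_n(\mC)$ forces an approximant in $M_n(\mB)$, i.e.\ $M_n(\mA)\subseteq_{6k\gamma}M_n(\mB)$: granting that, for $x\in\mA\otimes^{\min}\mD$ with $\|x\|\le1$ one sets $X_\lambda=(\mathrm{id}\otimes\phi_\lambda)(x)$, picks $Y_\lambda\in M_{n_\lambda}(\mB)$ with $\|X_\lambda-Y_\lambda\|\le6k\gamma$, and notes that $y_\lambda:=(\mathrm{id}\otimes\psi_\lambda)(Y_\lambda)\in\mB\otimes^{\min}\mD$ satisfies $\|(\mathrm{id}\otimes\psi_\lambda\phi_\lambda)(x)-y_\lambda\|\le6k\gamma$; since $(\mathrm{id}\otimes\psi_\lambda\phi_\lambda)(x)\to x$, this gives $d(x,\mB\otimes^{\min}\mD)\le6k\gamma$, which is the near-inclusion. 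The ``in particular'' assertion then follows by applying this in both directions to any $\gamma>d_0(\mA,\mB)$ --- using property $D_k$ of $\mA$ and of $\mB$ --- and letting $\gamma\downarrow d_0(\mA,\mB)$.

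\emph{The matricial near-inclusion.} Equivalently, it suffices to build a single completely bounded map $\Phi\colon\mA\to\mB$ with $\|\iota-\Phi\|_{\mathrm{cb}}\le6k\gamma$, where $\iota\colon\mA\hookrightarrow\mC\subseteq B(\mH)$ is the inclusion; then $\Phi^{(n)}\colon M_n(\mA)\to M_n(\mB)$ gives the required estimates at each level (in fact $\Phi\otimes\mathrm{id}_\mD$ then maps $\mA\otimes^{\min}\mD$ into $\mB\otimes^{\min}\mD$ within $6k\gamma$ of the inclusion outright, since $\|(\iota-\Phi)\otimes\mathrm{id}_\mD\|\le\|\iota-\Phi\|_{\mathrm{cb}}$). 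This is where property $D_k$ is used. From $\mA\subseteq_\gamma\mB$ one gets, for each $a\in\mA$, a non-canonical choice of an element of $\mB$ within $\gamma\|a\|$ of $a$; done entrywise this only yields error $\le n\gamma$ on $M_n(\mA)$, so the task is to upgrade these choices to a linear completely bounded selection. Property $D_k$ is precisely the quantitative device for that: in the form I would use, it says that the relevant first cohomology of $\mA$ vanishes with control $k$ --- every suitably continuous derivation of $\mA$ into a dual Banach $\mA$-bimodule is inner, implemented by an element whose norm is at most $k$ times that of the derivation (equivalently, a distance-to-relative-commutant estimate with constant $k$). One writes down the derivation measuring the failure of a first crude choice to be a bimodule map, bounds its norm by a small multiple of $\gamma$ --- the factor $6$ arising from reducing a general element to self-adjoint parts and then to positive parts, with rescaling --- and corrects the crude choice using property $D_k$, paying the factor $k$.

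I expect the real obstacle to be this last construction: arranging that the corrected map lands in $\mB$ itself, not merely in $\mB''$ or in $\mC$, and that it is completely bounded with cb-bound $6k\gamma$ (a bound only on the ordinary norm would not survive tensoring). Choosing the dual $\mA$-bimodule and the derivation so that $D_k$ applies \emph{and} the output stays inside $\mB$ is the delicate point; once that is arranged, tracking the constant to obtain $6k$ is routine bookkeeping.
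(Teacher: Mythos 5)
First, a point of order: the paper does not prove this statement at all --- it is quoted verbatim as background from Christensen's 1980 Acta paper \cite[Theorem 3.1]{Ch1}, so there is no in-paper proof to compare against; your proposal has to be judged against the known argument. Your first step (the reduction via the completely positive approximation property of $\mD$ to a \emph{uniform-in-$n$} near inclusion $M_n(\mA)\subseteq_{6k\gamma}M_n(\mB)$, followed by pushing back with $\mathrm{id}\otimes\psi_\lambda$ and a point-norm limit) is correct and is indeed the standard route; this part is sound.

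The gap is in the matricial step, which is the actual content of the theorem. You propose to realize the uniform estimate by building a single linear map $\Phi\colon\mA\to\mB$ with $\|\iota-\Phi\|_{\mathrm{cb}}\le 6k\gamma$. That is strictly stronger than what is needed and is not what property $D_k$ delivers: a ``complete near inclusion'' ($M_n(\mA)\subseteq_\delta M_n(\mB)$ for every $n$) is a pointwise, non-linear statement, and producing a genuinely linear completely bounded selection landing in $\mB$ is essentially the much harder problem treated decades later in \cite{Ch2} under additional hypotheses --- you yourself concede that you cannot arrange for the corrected map to land in $\mB$ rather than $\mB''$ or $\mC$, which means the plan is not carried out at its crucial point. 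The derivation-cohomology framing is also not how $D_k$ enters. Property $D_k$ is the estimate $d(T,\mA')\le k\,\|\mathrm{ad}(T)|_{\mA}\|$ for every $T$ in every representation of $\mA$, and Christensen uses it pointwise: working in the universal representation (so that $d(y,M_n(\mB))=d(y,M_n(\mB''))$ via the bidual), the hypothesis $\mA\subseteq_\gamma\mB$ gives $\|\mathrm{ad}(b')|_{\mA}\|\le 2\gamma$ for $b'\in B_1(\mB')$, whence $d(b',\mA')\le 2k\gamma$ by $D_k$, whence $\|\mathrm{ad}(b')|_{\mA}\|_{\mathrm{cb}}\le 4k\gamma$; this is what makes each $x\in B_1(M_n(\mA))$ almost commute with $1_n\otimes\mB'$ \emph{uniformly in $n$}, and the approximant in $M_n(\mB'')$ is then extracted from there, with the constant $6$ coming out of this bookkeeping rather than from a decomposition into self-adjoint and positive parts. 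Without either this commutant mechanism or a completed construction of your $\Phi$, the proposal does not prove the theorem.
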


\begin{theorem}\cite[Corollary 4.7]{Ch3}
Let $\mC$ be a $C^*$-algebra with $C^*$-subalgebras $\mA$ and $\mB$,
and let $\mD$ be a nuclear $C^*$-algebra. Then, for every $l, K\in
\mbb{N}$, there exists a constant $L_{l,K}$ (depending only on $l$ and
$K$) such that when $\mA$ has length at most $l$ with length constant
at most $K$, then
\[
d_{KK}(\mA \otimes^{\min} \mD, \mB \otimes^{\min} \mD)\leq L_{l,K}d_{KK}(\mA, \mB).
\] 

\end{theorem}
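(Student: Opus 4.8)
The plan is to reduce the statement to a single hard step. The hard step (call it (I)) is to upgrade the hypothesis $d_{KK}(\mA,\mB)<\gamma$ from an ordinary near-inclusion to a \emph{completely bounded} one --- i.e.\ one under which all matrix amplifications remain close --- at the cost of a multiplicative constant $L_{l,K}$ depending on $l$ and $K$ alone; granting (I), the nuclearity of $\mD$ then completes the proof (step (II)). Throughout we take $\mA,\mB\subseteq B(\mH)$ and write $\iota_{\mA}$ for the inclusion $\mA\hookrightarrow B(\mH)$. Since $d_{KK}$ of any two subalgebras is at most $1$, if we insist that $L_{l,K}\ge\delta^{-1}$ for a threshold $\delta=\delta(l,K)$ to be fixed in Step (I), then the asserted inequality is automatic when $\gamma\ge\delta$; hence we may and do assume $\gamma<\delta(l,K)$.

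\emph{Step (I): the completely bounded upgrade.} Suppose $\mA\subseteq_\gamma\mB$ with $\gamma<\delta(l,K)$. Christensen's near-inclusion and perturbation lemmas (see \cite{Ch3}) then yield $\mA''\subseteq\mB''$ together with a unital bounded homomorphism $\theta\colon\mA\to\mB''$ satisfying $\|\theta-\iota_{\mA}\|\le c_0\gamma$ and $\|\theta\|\le 1+c_0\gamma$, with $c_0$ an absolute constant. The linear map $\delta:=\theta-\iota_{\mA}$ is not multiplicative, but a direct computation gives $\delta(ab)=\theta(a)\delta(b)+\delta(a)b$; thus $\delta$ is a bounded derivation of $\mA$ into $B(\mH)$ equipped with the operator $\mA$-bimodule structure $a\cdot x\cdot b:=\theta(a)xb$, whose structure norm is at most $\|\theta\|_{cb}$ because the left action is implemented by $\theta$. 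Now the length hypothesis enters twice. First, Pisier's similarity-degree theory gives $\|\theta\|_{cb}\le K\|\theta\|^{l}\le K(1+c_0\gamma)^{l}\le 2K$ for $\gamma$ small, so the bimodule above has structure norm at most $2K$. Second --- and this is the decisive input --- a $C^*$-algebra of length at most $l$ with length constant at most $K$ satisfies the matching cohomological estimate: every bounded derivation of $\mA$ into an operator $\mA$-bimodule $X$ is automatically completely bounded, with $\|\delta\|_{cb}$ bounded in terms of $\|\delta\|$, of $l$ and $K$, and of the structure norm of $X$ alone (this is the Hochschild-cohomological face of the similarity property; cf.\ \cite{Ch3}). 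Applied to our $\delta$ and $X$, this gives $\|\delta\|_{cb}\le L_{l,K}\gamma$. Since $\|\theta^{(n)}-\iota_{\mA}^{(n)}\|=\|\delta\otimes\mathrm{id}_{M_n}\|\le\|\delta\|_{cb}$ for every $n$, the amplified maps $\theta^{(n)}$ carry $(M_n(\mA))_1$ to within $L_{l,K}\gamma$ of $M_n(\mB'')$. A further perturbation argument --- exploiting the near-inclusion of $\mA$ into $\mB$ itself, not merely $\mA''\subseteq\mB''$ --- then lets one replace $M_n(\mB'')$ by $M_n(\mB)$ after enlarging $L_{l,K}$, yielding the completely bounded near-inclusion $\mA\subseteq_{cb,\,L_{l,K}\gamma}\mB$, by which we mean $\sup_n\sup_{x\in(M_n(\mA))_1}\mathrm{dist}\!\big(x,\,M_n(\mB)\big)\le L_{l,K}\gamma$. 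In particular, $d_{KK}\!\big(\mA\omin M_n,\,\mB\omin M_n\big)\le L_{l,K}\gamma$ for every $n$.

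\emph{Step (II): passing through $\mD$.} Using the completely positive approximation property of the nuclear algebra $\mD$, fix nets of unital completely positive maps $\phi_i\colon\mD\to M_{k_i}$ and $\psi_i\colon M_{k_i}\to\mD$ with $\psi_i\circ\phi_i\to\mathrm{id}_{\mD}$ in the point-norm topology. Then $\mathrm{id}_{\mA}\otimes\phi_i\colon\mA\omin\mD\to M_{k_i}(\mA)$ and $\mathrm{id}_{\mB}\otimes\psi_i\colon M_{k_i}(\mB)\to\mB\omin\mD$ are unital and completely positive, hence contractive, and a routine density argument shows $\mathrm{id}_{\mA}\otimes(\psi_i\circ\phi_i)\to\mathrm{id}$ in point norm on $\mA\omin\mD$. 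For $x\in(\mA\omin\mD)_1$, the element $(\mathrm{id}_{\mA}\otimes\phi_i)(x)$ lies in $M_{k_i}(\mA)$ with norm $\le 1$, hence within $L_{l,K}\gamma$ of $M_{k_i}(\mB)=\mB\omin M_{k_i}$ by Step (I); applying the contraction $\mathrm{id}_{\mB}\otimes\psi_i$ keeps it within $L_{l,K}\gamma$ of $\mB\omin\mD$; and $(\mathrm{id}_{\mA}\otimes(\psi_i\circ\phi_i))(x)\to x$. Letting $i$ run gives $\mathrm{dist}(x,\mB\omin\mD)\le L_{l,K}\gamma$, i.e.\ $\mA\omin\mD\subseteq_{L_{l,K}\gamma}\mB\omin\mD$. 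Running the same argument from $\mB\subseteq_\gamma\mA$ --- now using that, since $\gamma<\delta(l,K)$, the algebra $\mB$ inherits length at most $l$ with length constant at most $2K$ from $\mA$, the length data being stable under sufficiently small Kadison--Kastler perturbation --- gives the reverse near-inclusion. Combining the two and adjusting $L_{l,K}$ once more yields $d_{KK}(\mA\omin\mD,\mB\omin\mD)\le L_{l,K}\,d_{KK}(\mA,\mB)$.

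\emph{The main obstacle.} Everything outside Step (I) is formal; the whole content of the theorem, and the only use of the length hypothesis, lies in the completely bounded upgrade. A norm near-inclusion produces only a \emph{bounded} homomorphism $\theta$ close to the inclusion, and there is no a priori reason for its amplifications $\theta^{(n)}$ to stay close to $\iota_{\mA}^{(n)}$ --- indeed, without a similarity-type assumption on $\mA$ it is an open problem whether $d_{KK}(\mA\omin\mD,\mB\omin\mD)$ can be controlled by $d_{KK}(\mA,\mB)$ at all, as recalled in the introduction. The crux is thus to pass from the plain estimate $\|\theta-\iota_{\mA}\|\le c_0\gamma$ to the completely bounded estimate $\|\theta-\iota_{\mA}\|_{cb}\le L_{l,K}\gamma$, uniformly in $n$ and with $L_{l,K}$ independent of $\mA$, $\mB$ and $\mD$; Pisier's inequality bounds $\|\theta\|_{cb}$ but says nothing about $\|\theta-\iota_{\mA}\|_{cb}$, which is precisely why the argument must be routed through the cohomological (derivation) incarnation of the finite-length property, with all constants tracked explicitly. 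The two ancillary points --- descending from $M_n(\mB'')$ to $M_n(\mB)$, and the one-sidedness of the hypothesis (length assumed only for $\mA$) --- are genuine but minor, handled respectively by a further perturbation step and by the trivial bound $d_{KK}\le 1$ together with stability of the length property under small perturbations.
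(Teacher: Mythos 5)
First, note that the paper does not prove this statement at all: it is quoted verbatim from \cite[Corollary 4.7]{Ch3} as background in the introduction, so there is no internal proof to compare against. Judged on its own terms, your outline gets the architecture right --- reduce to a completely bounded near-inclusion $\mA\subseteq_{cb,L_{l,K}\gamma}\mB$, push it through the completely positive approximation property of $\mD$ (your Step (II) is correct and is essentially how Christensen et al.\ pass from matrices to a general nuclear $\mD$), and handle the one-sidedness of the length hypothesis by transferring the length data to $\mB$ (which is indeed a theorem of \cite{Ch3}). But Step (I), the part you correctly identify as carrying all the content, is not justified as written and is also much harder than the route actually available.

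Two concrete gaps. (a) You invoke, as a black box with an ``absolute constant $c_0$,'' a bounded unital homomorphism $\theta\colon\mA\to\mB''$ with $\|\theta-\iota_\mA\|\le c_0\gamma$ produced from the mere near-inclusion $\mA\subseteq_\gamma\mB$. No such general result exists: constructing a nearby homomorphism from a near-inclusion is the hard part of perturbation theory and requires hypotheses (injectivity of $\mB''$, nuclearity, etc.); it is not a consequence of finite length alone. The subsequent appeal to a quantitative ``automatic complete boundedness of derivations into $\theta$-twisted operator bimodules'' with constants depending only on $l$, $K$ and the structure norm is likewise asserted, not proved, and is not a quotable result in that generality. (b) Your descent from $M_n(\mB'')$ to $M_n(\mB)$ by ``a further perturbation argument'' hides a real problem: entrywise approximation (Kaplansky density in each matrix entry) loses a factor of $n$ in the matrix norm, destroying the uniformity in $n$ that the cb-statement requires. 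The actual proof in \cite{Ch3} avoids both issues by using the length hypothesis directly and elementarily: write $x\in M_n(\mA)$, $\|x\|\le 1$, as $x=\gamma_0D_1\gamma_1\cdots D_l\gamma_l$ with scalar matrices $\gamma_i$, \emph{diagonal} matrices $D_j$ over $\mA$, and $\prod_i\|\gamma_i\|\prod_j\|D_j\|\le K$; replace each diagonal entry of each $D_j$ by an element of $\mB$ within $\gamma\|D_j\|$ (no loss of a factor of $n$, precisely because the norm of a diagonal matrix is the maximum of the norms of its entries); a telescoping estimate then gives $\mathrm{dist}(x,M_n(\mB))\le lK(1+\gamma)^{l-1}\gamma$. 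This yields the completely bounded near-inclusion with an explicit $L_{l,K}$ and no homomorphism, no derivation, and no passage through $\mB''$. I would recommend replacing your Step (I) by this factorization argument; your Step (II) and the length-transfer remark can then stand essentially as written.
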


Very recently, using some techniques from \cite{Ch1}, the following
 stability result in the category of $C^*$-algebras was achieved in \cite{GK2}:

\begin{theorem}\cite[Theorem 4.6]{GK2}
Let $\mD$ be a commutative $C^{*}$-algebra and, 
$\mA$ and $\mB$ be
  $C^*$-subalgebras of a $C^*$-algebra $\mC$. Then,
\[
d_{KK}(\mA\omin \mD, \mB\omin \mD)\leq d_{KK}(\mA, \mB).
\]
Moreover, if $\mD$ is unital, then
\[
d_{KK}(\mA\omin \mD, \mB\omin \mD)= d_{KK}(\mA, \mB).
\]
\end{theorem}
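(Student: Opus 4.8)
The plan is to use Gelfand duality to write $\mD\cong C_0(X)$ for a locally compact Hausdorff space $X$, and to identify $\mA\omin\mD$, $\mB\omin\mD$ and $\mC\omin\mD$ with the $C^*$-algebras $C_0(X,\mA)$, $C_0(X,\mB)$, $C_0(X,\mC)$ of continuous $\mA$-, $\mB$-, $\mC$-valued functions on $X$ vanishing at infinity, equipped with the supremum norm. Since the minimal tensor product preserves inclusions, $C_0(X,\mA)$ and $C_0(X,\mB)$ sit as $C^*$-subalgebras of $C_0(X,\mC)$, which is the ambient algebra in which the Kadison--Kastler distance is computed. Because $d_{KK}$ is the infimum of those $\gamma$ for which both near-inclusions $\mA\subseteq_{\gamma}\mB$ and $\mB\subseteq_{\gamma}\mA$ hold, and the two algebras play symmetric roles, it suffices to prove the one-sided statement: if $\mA\subseteq_{\gamma}\mB$ inside $\mC$ and $\epsilon>0$, then $C_0(X,\mA)\subseteq_{\gamma+\epsilon}C_0(X,\mB)$ inside $C_0(X,\mC)$. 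Applying this both ways for any $\gamma>d_{KK}(\mA,\mB)$, then letting $\epsilon\downarrow0$ and $\gamma\downarrow d_{KK}(\mA,\mB)$, yields $d_{KK}(\mA\omin\mD,\mB\omin\mD)\le d_{KK}(\mA,\mB)$.

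To prove the one-sided statement, fix $f$ in the closed unit ball of $C_0(X,\mA)$ and $\epsilon>0$. Choose a compact set $K\subseteq X$ with $\|f(x)\|<\epsilon$ for $x\notin K$ and, by continuity of $f$, a finite open cover $U_1,\dots,U_n$ of $K$ together with points $x_i\in U_i$ such that $\|f(x)-f(x_i)\|<\epsilon$ on $U_i$. By $\mA\subseteq_{\gamma}\mB$, pick $b_i$ in the closed unit ball of $\mB$ with $\|f(x_i)-b_i\|\le\gamma$ (if instead the convention for $\subseteq_{\gamma}$ only provides $b_i\in\mB$ with $\|b_i\|\le1+\gamma$, the argument below is unchanged and gives the same distance bound). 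Choose $\chi\in C_c(X)$ with $0\le\chi\le1$, $\chi\equiv1$ on $K$ and $\operatorname{supp}\chi\subseteq\bigcup_i U_i$, and a subordinate partition of unity $\phi_1,\dots,\phi_n$ with $\operatorname{supp}\phi_i\subseteq U_i$ and $\sum_i\phi_i=\chi$. Set $g=\sum_{i=1}^n\phi_i\,b_i\in C_c(X,\mB)\subseteq C_0(X,\mB)$, so that $\|g\|\le1$. For every $x\in X$ write $f(x)-g(x)=\sum_i\phi_i(x)\bigl(f(x)-b_i\bigr)+\bigl(1-\chi(x)\bigr)f(x)$; in the $i$-th term $\phi_i(x)\neq0$ forces $x\in U_i$, whence $\|f(x)-b_i\|\le\|f(x)-f(x_i)\|+\|f(x_i)-b_i\|<\gamma+\epsilon$, while in the last term $\chi(x)<1$ forces $x\notin K$, whence $\|f(x)\|<\epsilon$. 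Using $\sum_i\phi_i(x)=\chi(x)\le1$, a direct estimate gives $\|f(x)-g(x)\|\le\gamma+2\epsilon$, hence $\|f-g\|\le\gamma+2\epsilon$. As $f$ was arbitrary, $C_0(X,\mA)\subseteq_{\gamma+2\epsilon}C_0(X,\mB)$, which is the desired statement (with $2\epsilon$ in place of $\epsilon$).

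For the moreover part, suppose $\mD$ is unital, so $X$ is compact; fix $x_0\in X$. The point evaluation $\operatorname{ev}_{x_0}\colon C(X,\mC)\to\mC$ is a norm-decreasing $*$-homomorphism mapping $C(X,\mA)$ onto $\mA$ and $C(X,\mB)$ onto $\mB$, and sending each constant function $x\mapsto c$ to $c$. Let $\gamma>d_{KK}(\mA\omin\mD,\mB\omin\mD)$ and let $a$ lie in the closed unit ball of $\mA$; then the constant function $\hat a\colon x\mapsto a$ lies in the closed unit ball of $C(X,\mA)$, so there is $g$ in the closed unit ball of $C(X,\mB)$ with $\|\hat a-g\|\le\gamma$. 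Applying $\operatorname{ev}_{x_0}$ gives $b:=g(x_0)\in\mB$ with $\|b\|\le1$ and $\|a-b\|\le\gamma$, so $\mA\subseteq_{\gamma}\mB$; by symmetry $\mB\subseteq_{\gamma}\mA$, hence $d_{KK}(\mA,\mB)\le\gamma$. Letting $\gamma\downarrow d_{KK}(\mA\omin\mD,\mB\omin\mD)$ gives the reverse inequality, and with the first part, equality.

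The only substantial step is the construction in the second paragraph: the pointwise near-inclusion $\mA\subseteq_{\gamma}\mB$ must be promoted to a near-inclusion of the function algebras, i.e.\ one must manufacture a single globally continuous $\mB$-valued function staying within $\gamma$ (up to $\epsilon$) of $f$. This is exactly where convexity of a norm ball of radius $\gamma$ enters --- one averages the locally chosen approximants $b_i$ against a partition of unity --- after first reducing to finitely many ``control points'' $x_i$ by uniform continuity of $f$ on the compact set where it is not already small; the extra bookkeeping in the non-unital case (the cutoff $\chi$ and decay at infinity) is routine. The reverse inequality, by contrast, is immediate once one notes that a point evaluation is a contractive $*$-homomorphism fixing the constants and mapping onto the given subalgebras.
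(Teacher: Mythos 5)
This theorem is quoted from \cite{GK2} and is not actually proved in the present paper; the nearest in-paper analogues are \Cref{reverse inequality} (whose proof is explicitly deferred to the ``verbatim'' argument of \cite[Theorem 4.6]{GK2}) and \Cref{equality}. Your route --- identify $\mD$ with $C_0(X)$ and the minimal tensor products with $C_0(X,\mA)\subseteq C_0(X,\mC)$, patch locally chosen approximants with a partition of unity for the forward inequality, and use a point evaluation for the reverse inequality in the unital case --- is exactly the approach those analogues take; \Cref{equality} uses a general state $\phi$ on $C(K)$ and the slice map $\mathrm{id}\otimes^{\varepsilon}\phi$ where you use $\mathrm{ev}_{x_0}$ (a character, i.e.\ a particular state), which is the same mechanism. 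The partition-of-unity step is genuinely needed here: the convex-hull description of the unit ball that drives the projective-tensor-product arguments of Section 4 fails for the injective/minimal norm, so one cannot simply approximate elementary tensors term by term.

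One imprecision should be repaired: $d_{KK}$ is \emph{not} ``the infimum of those $\gamma$ for which both near-inclusions hold'' --- that is the Christensen distance $d_0$, and the two are only equivalent up to a factor of $2$ (\Cref{d0-basics}(3)). Arguing purely through $\subseteq_{\gamma}$, with approximants $b_i\in\mB$ of norm possibly $1+\gamma$, would only yield $d_{KK}(\mA\omin\mD,\mB\omin\mD)\le 2\,d_{KK}(\mA,\mB)$. The fix is already implicit in your construction: for $\gamma>d_{KK}(\mA,\mB)$, the definition of the Hausdorff distance between unit balls lets you take $b_i\in B_1(\mB)$ with $\|f(x_i)-b_i\|\le\gamma$, and then $g=\sum_i\phi_i b_i$ satisfies $\|g(x)\|\le\sum_i\phi_i(x)\|b_i\|\le\chi(x)\le 1$, so $g\in B_1(C_0(X,\mB))$ and the estimate $\|f-g\|\le\gamma+2\epsilon$ bounds $d\bigl(f,B_1(C_0(X,\mB))\bigr)$ directly. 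Phrase the forward direction consistently in terms of unit balls (your point-evaluation step already does this, since $\|g(x_0)\|\le\|g\|\le1$) and the proof is complete.
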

It is noteworthy that such comparisons were used effectively by
Christensen and others in proving some fundamental results related to
perturbation and preserving of certain invariants of close
subalgebras. For instance:
\begin{enumerate}
  \item[(a)] In \cite[Theorem 4.3]{Ch1}, Christensen
proved that if an injective von Neumann algebra $N$ admits a near
inclusion in some von Neumann algebra $M$, then $N$ embeds in $M$ via a
unitary conjugation.
\item[ (b)] In \cite{Khos84}, Khoshkam proved that
if the matrix-amplifications of two $C^*$-algebras are sufficiently
(uniformly) close, then they have isomorphic $K$-groups; which, by one
of the above mentioned results of Christensen (see \Cref{Ch1-6kd})
allows one to deduce that if $\mA$ is a nuclear $C^*$-algebra, then
any sufficiently close $C^*$-algebra to $\mA$ has 
$K$-groups isomorphic to  those of $\mA$.
\item[(c)] Moreover, Christensen et al (\cite[$\S 5$]{Ch3}) exploited
  the techniques of Khoshkam and Christensen further to prove that
  $C^*$-algebras  sufficiently close to a $C^*$-algebra belonging to  a certain family have
  isomorphic (ordered) $K$-theories.
\end{enumerate}

Over the last four decades or so, the study of non-operator algebraic
tensor products (like the Banach space and operator space projective
tensor products and the Haagerup tensor product) of $C^*$-algebras
have become very relevant in  the world of operator
algebras - see, for instance, \cite{Ble, ASS, BKS, KJ, GR} and the
references cited therein. It is thus natural to ask similar questions (as above) from
the perspective of non-operator algebraic tensor products as well. In
this direction, we obtain some analogues of the above-mentioned results with
respect to the injective and projective Banach space tensor
products. More precisely, we prove the following:\medskip

\noindent {\em {\bf \Cref{equality}.}  Let $\mcal{X}$ be a Banach
  space and $C(K)$ denote the Banach space consisting of continuous
  functions on a compact Hausdorff space $K$. Then, for any two closed subspaces
  $\mcal{Y}$ and $\mcal{Z}$ of $\mcal{X}$,
\[
d_{KK}(\mcal{Y}, \mcal{Z})= d_{KK}(\mcal{Y}\otimes^{\varepsilon} C(K),
\mcal{Z}\otimes^{\varepsilon} C(K))
\]
and      
\[
d_{0}(\mcal{Y}, \mcal{Z})= d_{0}(\mcal{Y}\otimes^{\varepsilon} C(K),
\mcal{Z}\otimes^{\varepsilon} C(K)).
\]}
\medskip

\noindent{\em {\bf \Cref{Projective-Ch}.}  Let $\mC$ and $\mD$ be two
  $C^*$-algebras and, $\mA$ and $ \mB$ be $C^*$-subalgebras of
  $\mC$. If $\beta$ is a positive scalar such that $\mA
  \subseteq_{\beta}\mB$, then $\mA \otimes^{\gamma} \mD
  \subseteq_{\beta}\mB \otimes^{\gamma} \mD$.  In particular,
$$d_0(\mA \otimes^{\gamma} \mD, \mB \otimes^{\gamma} \mD)  \leq d_0(\mA, \mB).$$
Moreover, if $\mD$ is unital, then
$$d_{0}(\mA \otimes^{\gamma} \mD, \mB \otimes^{\gamma} \mD) = d_{0}(\mA, \mB).$$ 
}

\noindent{\em {\bf \Cref{Projective-KK}.}  Let $\mC$ and $\mD$ be two
  $C^*$-algebras and, $\mA$ and $ \mB$ be $C^*$-subalgebras of $\mC$. Then,
 $$d_{KK}(\mA \otimes^{\gamma} \mD, \mB \otimes^{\gamma} \mD)  \leq d_{KK}(\mA, \mB).$$
 Moreover, if $\mD$ is unital, then
$$d_{KK}(\mA \otimes^{\gamma} \mD, \mB \otimes^{\gamma} \mD) = d_{KK}(\mA, \mB).$$  } 

It is natural to ask whether it is possible to compute the precise distance for certain operator algebras. To address this question,
 in \cite{GK}, we calculated the precise distance in several specific examples of operator algebras. In a similar spirit,
 in \Cref{twisted} and \Cref{crossedv}, we make some concrete
calculations of the Kadison–Kastler and Christensen distances between
subalgebras (associated to subgroups of a discrete group) of reduced
twisted crossed-product $C^*$-algebras and of crossed-product von
Neumann algebras. Furthermore, towards the end, we calculate the
Mashood-Taylor distance between the crossed product von Neumann
subalgebras (associated to subgroups of a discrete group) of the
crossed product of a discrete group with  a tracial von
Neumann algebra.

\section{Preliminaries}\label{prelims}

\subsection{Two notions of distance between subspaces of normed spaces}
\subsubsection{Kadison-Kastler distance}
For any normed space $\mcal{X}$, its closed unit ball will be denoted
by $B_1(\mcal{X})$ and for any subset $\mcal{S}$ of $\mcal{X}$ and an
element $x \in \mcal{X}$, as is standard, the distance between $x$ and
$\mcal{S}$ is given by \( d(x, \mcal{S}) = \inf\{ \|x - s\|: s \in
\mcal{S}\}.  \)

For any two subspaces $\mcal{Y}$ and $\mcal{Z}$ of a normed space
$\mcal{X}$, recall (from \cite{KK}) that the Kadison-Kastler distance
between them (which we denote by $d_{KK}(\mcal{Y},\mcal{Z})$) is
defined as the Hausdorff distance between their closed unit balls,
i.e.,
\[
d_{KK}(\mcal{Y},\mcal{Z}) := \max\left\{\sup_{y \in B_1(\mcal{Y})} d(y, B_1(\mcal{Z})),
\sup_{z\in B_1(\mcal{Z})}d(z, B_1(\mcal{Y}))\right\}.
\]

\begin{remark}{\label{KK-facts}}
  Let $\mcal{X}$ be a normed space.  Then, the following facts are well known:
\begin{enumerate}
\item $d_{KK}(\mcal{Y},\mcal{Z}) \leq 1$ for all subspaces $\mcal{Y},
  \mcal{Z}$ of $\mcal{X}$.
 \item $d_{KK}(\mcal{Y},\mcal{Z}) = d_{KK}(\overline{\mcal{Y}},
   \mcal{Z})= d_{KK}(\overline{\mcal{Y}}, \overline{\mcal{Z}})$ for
   all subspaces $\mcal{Y},\mcal{Z}$ of $\mcal{X}$.

\end{enumerate}
\end{remark}

\subsubsection{Near inclusions and Christensen distance}

 Let $\mcal{X}$ be a normed space. Recall from \cite{Ch2} that, for
 any two subspaces $\mcal{Y}$ and $ \mcal{Z}$ of $\mcal{X}$ and a
 scalar $\gamma > 0$, $\mcal{Y} \subseteq_{\gamma} \mcal{Z}$ if for
 each $y \in B_1(\mcal{Y})$, there exists a $ z \in \mcal{Z}$ such
 that $\|y-z\| \leq \gamma$; and, the Christensen distance between
 $\mcal{Y}$ and $\mcal{Z}$ is defined by
   $$
    d_0(\mcal{Y},\mcal{Z}) = \inf\{\gamma > 0 \, : \, \mcal{Y} \subseteq_{\gamma}
    \mcal{Z} \text{ and } \mcal{Z} \subseteq_{\gamma} \mcal{Y}\}.
  $$  

\begin{remark}\label{d0-basics}
  Let $\mcal{X}$ be a normed space. Then, the following useful facts are
  well known:
      \begin{enumerate}
      \item $d_0(\mcal{Y}, \mcal{Z}) \leq 1$ for all subspaces $\mcal{Y}, \mcal{Z}$ of $\mcal{X}$.
\item $d_0(\mcal{Y},\mcal{Z}) = d_0(\overline{\mcal{Y}}, \mcal{Z})= d_0(\overline{\mcal{Y}},
  \overline{\mcal{Z}})$ for all subspaces $\mcal{Y}, \mcal{Z}$ of $\mcal{X}$.
  \item The distances $d_0$ and $d_{KK}$
    are ``equivalent" in the sense that
  \[
  d_0 (\mcal{Y},\mcal{Z}) \leq d_{KK}(\mcal{Y},\mcal{Z}) \leq 2 d_0(\mcal{Y},\mcal{Z})
  \]
  for all subspaces $\mcal{Y},\mcal{Z}$ of $\mcal{X}$. (\cite[Remark
    2.3]{Ch2}) \end{enumerate}
      \end{remark}

\subsubsection{Mashood-Taylor distance} 
Let $\mcal{M}$ be a von Neumann algebra with a faithful normal tracial
state $\tau$. Recall from \cite[Section 5]{GK} that, for any two subalgebras
$\mcal{P}$ and $ \mcal{Q}$ of  $\mcal{M}$, the Mashood-Taylor
distance between them  is given by
\[
d_{MT} (\mcal{P}, \mcal{Q}) = d_{H, \|\cdot\|_\tau } (\widehat{B_1(\mcal{P})}, \widehat{B_1(\mcal{Q})}),
\]
where $d_{H, \|\cdot\|_\tau}$ denotes the Hausdorff distance with
respect to the metric induced by the norm $\|\cdot\|_\tau$ and, for
$\mcal{S} \subseteq \mcal{M}$, $\widehat{\mcal{S}}:= \{ \widehat{x} :
x \in \mathcal{S}\} \subseteq L^2(\mcal{M}, \tau)$.

\begin{remark}\label{SOT}
Let $(\mathcal{M},\tau)$ be as above. Then, for any two unital $*$-subalgebras $\mcal{P}$ and $ \mcal{Q}$ of $\mcal{M}$, 
\begin{enumerate}
\item $
d_{MT}(\mcal{P},\mcal{Q} ) = d_{MT}(\mcal{P}, \overline{\mcal{Q}}^{S.O.T.}) =
d_{MT}(\overline{\mcal{P}}^{S.O.T.}, \overline{\mcal{Q}}^{S.O.T.}) = d_{MT}(\mcal{P}'',
\mcal{Q}'')$; and, \hfill (\cite[Proposition 5.5]{GK})

\item $ 
d_{MT}(\mcal{P},\mcal{Q} )\leq d_{KK}(\mcal{P},\mcal{Q} ) $.\hfill  (\cite[Lemma 5.6]{GK})
\end{enumerate}
\end{remark}

\section{Distance between subspaces of injective tensor product of certain Banach spaces}\label{injective}
\subsection{Injective tensor product}
\begin{definition}
Let $\mcal{X}$ and $\mcal{Y}$ be Banach spaces and $\mcal{X}\otimes \mcal{Y}$ denote their algebraic tensor product. For any element $u= \sum_{i=1}^n x_i\otimes y_i \in \mcal{X}\otimes \mcal{Y}$, its injective norm is given by 
$$
\|u\|_{\varepsilon} = \sup \left\{\left|
  \sum_{i=1}^{n}\varphi(x_{i})\psi(y_{i})\right|\,\, :\,\,
 \varphi\in \mcal{B}_{1}(\mcal{X}^*),\, \psi\in \mcal{B}_{1}(\mcal{Y}^*)
  \right\}.
$$
The completion of $\mcal{X}\otimes \mcal{Y}$ with respect to the injective norm is denoted by $\mcal{X} \otimes^{\varepsilon} \mcal{Y}$. The Banach space $\mcal{X} \otimes^{\varepsilon} \mcal{Y}$ is known as the injective tensor product of the Banach spaces $\mcal{X}$ and $ \mcal{Y}$. 
\end{definition}

\begin{remark}\label{bounded maps}
Let $\mcal{X},\mcal{Y},\mcal{Z}$ and $\mcal{W}$ be the normed spaces
and $T_{1}:\mcal{X}\rightarrow \mcal{W} $ and $T_{2}:
\mcal{Y}\rightarrow \mcal{Z}$ be bounded linear maps. Then, there
exists a unique bounded map $T_{1}\otimes^{\varepsilon} T_{2}:
\mcal{X} \otimes^{\varepsilon} \mcal{Y} \rightarrow \mcal{W}
\otimes^{\varepsilon} \mcal{Z} $ such that $T_{1}\otimes^{\varepsilon}
T_{2}(x\otimes y)= T_{1}(x)\otimes T_{2}(y)$ for every $x\in \mcal{X},
y\in \mcal{Y}$. Also, $\|T_{1}\otimes^{\varepsilon} T_{2}\|=
\|T_{1}\|\|T_{2}\|$.\hfill  (See \cite[Proposition 3.2]{R}, for a proof.)
\end{remark}

The following identification is well known - see, for instance,
(\cite[Lemma 6.4.16]{JM} and \cite[Section 3.2]{R}).

\begin{lemma}\label{identification}
Let $\mcal{X}$ be a Banach space and $C_{0}(\Omega)$ denote the Banach
space of complex continuous functions vanishing at infinity on a
locally compact Hausdorff space $\Omega$.  Then,
$C_{0}(\Omega)\otimes^{\varepsilon} \mcal{X}$ is isometrically
isomorphic to the Banach space $C_{0}(\Omega,\mcal{X})$ of continuous
functions from $\Omega$ into $\mcal{X}$ that vanish at infinity.
\end{lemma}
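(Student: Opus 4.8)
The plan is to exhibit a concrete, natural isometry and then verify surjectivity by a density argument. First I would define a linear map $\Phi\colon C_{0}(\Omega)\otimes\mcal{X}\to C_{0}(\Omega,\mcal{X})$ on the algebraic tensor product by $\Phi(f\otimes x)(\omega)=f(\omega)\,x$ and linear extension; since the assignment $(f,x)\mapsto\big(\omega\mapsto f(\omega)x\big)$ is bilinear, it factors through $\otimes$, so $\Phi$ is well defined, and for $u=\sum_{i=1}^{n}f_{i}\otimes x_{i}$ the function $\Phi(u)$ is a finite sum of continuous $\mcal{X}$-valued functions, each vanishing at infinity, hence $\Phi(u)\in C_{0}(\Omega,\mcal{X})$.

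Next I would show that $\Phi$ is isometric from the injective norm on the domain to the supremum norm $\|F\|_{\infty}=\sup_{\omega\in\Omega}\|F(\omega)\|$ on $C_{0}(\Omega,\mcal{X})$. The one trick needed is an interchange of suprema. For $\psi\in\mcal{X}^{*}$ set $g_{\psi}:=\sum_{i}\psi(x_{i})f_{i}\in C_{0}(\Omega)$, so that $\sum_{i}\varphi(f_{i})\psi(x_{i})=\varphi(g_{\psi})$ for every $\varphi\in C_{0}(\Omega)^{*}$. Using the definition of $\|\cdot\|_{\varepsilon}$ together with the Hahn--Banach identities $\sup_{\varphi\in B_{1}(C_{0}(\Omega)^{*})}|\varphi(g)|=\|g\|_{\infty}$ and $\sup_{\psi\in B_{1}(\mcal{X}^{*})}|\psi(v)|=\|v\|$, one computes
\[
\|u\|_{\varepsilon}=\sup_{\psi\in B_{1}(\mcal{X}^{*})}\ \sup_{\varphi\in B_{1}(C_{0}(\Omega)^{*})}|\varphi(g_{\psi})|=\sup_{\psi\in B_{1}(\mcal{X}^{*})}\|g_{\psi}\|_{\infty}=\sup_{\omega\in\Omega}\ \sup_{\psi\in B_{1}(\mcal{X}^{*})}\Big|\psi\Big(\sum_{i}f_{i}(\omega)x_{i}\Big)\Big|=\|\Phi(u)\|_{\infty}.
\]
Therefore $\Phi$ extends uniquely to an isometric linear map $\overline{\Phi}\colon C_{0}(\Omega)\otimes^{\varepsilon}\mcal{X}\to C_{0}(\Omega,\mcal{X})$, whose range is consequently a closed subspace of $C_{0}(\Omega,\mcal{X})$.

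It then remains to prove that $\overline{\Phi}$ is surjective, for which it suffices to show that the ``finite-rank'' functions $\sum_{i}f_{i}(\cdot)x_{i}$ (that is, the range of $\Phi$) are dense in $C_{0}(\Omega,\mcal{X})$. Given $F\in C_{0}(\Omega,\mcal{X})$ and $\epsilon>0$, the set $\{\omega:\|F(\omega)\|\ge\epsilon\}$ is compact; covering it by finitely many open sets $V_{\omega_{1}},\dots,V_{\omega_{m}}$ on each of which $F$ varies by less than $\epsilon$, adjoining the open set $\{\omega:\|F(\omega)\|<\epsilon\}$, and choosing a subordinate partition of unity $\phi_{0},\phi_{1},\dots,\phi_{m}$ with $\phi_{1},\dots,\phi_{m}\in C_{c}(\Omega)$, one checks pointwise that $\sum_{j=1}^{m}\phi_{j}\otimes F(\omega_{j})$ approximates $F$ to within $\epsilon$ in the supremum norm. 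Combined with the previous paragraph, this makes $\overline{\Phi}$ a surjective isometry, which is the assertion; in the compact case $\Omega=K$ used later, the partition-of-unity step is completely standard. The only mildly delicate point is handling the vanishing-at-infinity condition in the density step via the compactness of $\{\|F\|\ge\epsilon\}$; everything else is formal.
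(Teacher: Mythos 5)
Your proof is correct and is the standard argument; the paper gives no proof of this lemma, merely citing \cite[Lemma 6.4.16]{JM} and \cite[Section 3.2]{R}, and those references prove it exactly as you do (the natural map $f\otimes x\mapsto f(\cdot)x$, the interchange of suprema giving the isometry, and a partition-of-unity density argument on the compact set $\{\|F\|\geq\epsilon\}$). No gaps.
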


It is well known that the injective tensor product preserves subspace
in the following sense (see \cite{R} for a proof).
\begin{lemma}
Let $\mcal{X}$ and $\mcal{Y}$ be Banach spaces and $\mcal{Z}$ be a
closed subspace of $\mcal{X}$. Then, the identity map on $\mcal{Z}
\otimes \mcal{Y}$ extends to an isometric map from $\mcal{Z}
\otimes^{\varepsilon}\mcal{Y}$ onto the closed subspace
$\overline{\mcal{Z} \otimes \mcal{Y}}^{\|\cdot\|_{\varepsilon}}$ of
$\mcal{X}\otimes^{\varepsilon} \mcal{Y}$.
\end{lemma}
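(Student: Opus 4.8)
The plan is to verify that the identity map $\iota$ on the algebraic tensor product $\mcal{Z}\otimes\mcal{Y}$ is isometric, where the source carries the injective norm built from $\mcal{Z}$ and $\mcal{Y}$ and the target carries the norm it inherits as a subspace of $\mcal{X}\otimes^{\varepsilon}\mcal{Y}$; granting this, $\iota$ extends uniquely to a linear isometry between the respective completions, and a density argument identifies the range with $\overline{\mcal{Z}\otimes\mcal{Y}}^{\|\cdot\|_{\varepsilon}}$.

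Concretely, fix $u=\sum_{i=1}^{n}z_{i}\otimes y_{i}\in\mcal{Z}\otimes\mcal{Y}$ and write out the two injective norms,
\[
\|u\|_{\varepsilon}^{\mcal{Z}}=\sup\Big\{\Big|\sum_{i=1}^{n}\varphi(z_{i})\psi(y_{i})\Big|:\varphi\in B_{1}(\mcal{Z}^{*}),\,\psi\in B_{1}(\mcal{Y}^{*})\Big\}
\]
and
\[
\|u\|_{\varepsilon}^{\mcal{X}}=\sup\Big\{\Big|\sum_{i=1}^{n}\phi(z_{i})\psi(y_{i})\Big|:\phi\in B_{1}(\mcal{X}^{*}),\,\psi\in B_{1}(\mcal{Y}^{*})\Big\}.
\]
Since restricting a functional to a subspace never increases its norm, $\phi\mapsto\phi|_{\mcal{Z}}$ carries $B_{1}(\mcal{X}^{*})$ into $B_{1}(\mcal{Z}^{*})$, which gives $\|u\|_{\varepsilon}^{\mcal{X}}\le\|u\|_{\varepsilon}^{\mcal{Z}}$. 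For the reverse inequality I would appeal to the Hahn--Banach theorem: each $\varphi\in B_{1}(\mcal{Z}^{*})$ admits a norm-preserving extension $\phi\in B_{1}(\mcal{X}^{*})$ with $\phi|_{\mcal{Z}}=\varphi$, so every functional occurring in the first supremum is the restriction of some $\phi\in B_{1}(\mcal{X}^{*})$; hence $\|u\|_{\varepsilon}^{\mcal{Z}}\le\|u\|_{\varepsilon}^{\mcal{X}}$, and the two norms agree on $\mcal{Z}\otimes\mcal{Y}$.

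It remains to pass to completions. A linear isometry between normed spaces extends uniquely to a linear isometry between their completions; applied to $\iota$ this yields an isometric linear map $\widetilde{\iota}\colon\mcal{Z}\otimes^{\varepsilon}\mcal{Y}\to\mcal{X}\otimes^{\varepsilon}\mcal{Y}$ extending the identity on $\mcal{Z}\otimes\mcal{Y}$. Its image is an isometric copy of a Banach space, hence complete, hence closed in $\mcal{X}\otimes^{\varepsilon}\mcal{Y}$; as it contains the dense subspace $\mcal{Z}\otimes\mcal{Y}$ of $\overline{\mcal{Z}\otimes\mcal{Y}}^{\|\cdot\|_{\varepsilon}}$ and is itself contained in that closure, the image equals $\overline{\mcal{Z}\otimes\mcal{Y}}^{\|\cdot\|_{\varepsilon}}$, as claimed. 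There is no real obstacle in this argument: the entire content is the Hahn--Banach extension step, which is precisely the reason the injective tensor norm respects subspaces, while the passage to completions and the identification of the range are routine bookkeeping.
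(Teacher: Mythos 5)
Your proof is correct. The paper gives no proof of its own for this lemma (it simply cites Ryan's book), and your argument --- matching the two injective norms via restriction of functionals in one direction and Hahn--Banach extension in the other, then passing to completions --- is precisely the standard argument from that reference.
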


We now proceed to establish the stability results that we asserted in
the Introduction. To begin with, we have the following analogue of
\cite[Theorem 4.6]{GK2}, whose proof follows verbatim (on the lines of
the proof given for \cite[Theorem 4.6]{GK2}).

\begin{theorem}\label{reverse inequality}
Let $\mcal{X}$ be a Banach space and $C_{0}(\Omega)$ denote the Banach
space consisting of continuous functions vanishing at infinity on a
locally compact Hausdorff space $\Omega$. Then, for any two closed
subspaces $\mcal{Y}$ and $\mcal{Z}$ of $\mcal{X}$,
\[
d_{KK}\left(\mcal{Y}\otimes^{\varepsilon} C_{0}(\Omega), \mcal{Z}\otimes^{\varepsilon} C_{0}(\Omega)\right)\leq d_{KK}(\mcal{Y}, \mcal{Z})
\]
and      
\[
 d_{0}\left(\mcal{Y}\otimes^{\varepsilon} C_{0}(\Omega), \mcal{Z}\otimes^{\varepsilon} C_{0}(\Omega)\right)\leq d_{0}(\mcal{Y}, \mcal{Z}).
\]
\end{theorem}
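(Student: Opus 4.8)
The plan is to transport the problem, by means of \Cref{identification}, to the concrete space $C_0(\Omega,\mcal{X})$ of continuous $\mcal{X}$-valued functions vanishing at infinity. Since the injective tensor norm is symmetric, \Cref{identification} provides isometric identifications $\mcal{W}\otimes^{\varepsilon}C_0(\Omega)\cong C_0(\Omega,\mcal{W})$ for each of $\mcal{W}\in\{\mcal{X},\mcal{Y},\mcal{Z}\}$, and by the lemma above stating that $\otimes^{\varepsilon}$ preserves closed subspaces, these are mutually compatible: under them $\mcal{Y}\otimes^{\varepsilon}C_0(\Omega)$ and $\mcal{Z}\otimes^{\varepsilon}C_0(\Omega)$ become precisely the closed subspaces $C_0(\Omega,\mcal{Y})$ and $C_0(\Omega,\mcal{Z})$ of $C_0(\Omega,\mcal{X})$ (elementary tensors correspond to finite-rank functions, whose closures fill out the full $\mcal{Y}$- and $\mcal{Z}$-valued function spaces because $\mcal{Y},\mcal{Z}$ are closed). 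So I would reduce to proving: with $d$ denoting either $d_{KK}(\mcal{Y},\mcal{Z})$ or $d_0(\mcal{Y},\mcal{Z})$, the corresponding inequality holds for $C_0(\Omega,\mcal{Y})$ and $C_0(\Omega,\mcal{Z})$ inside $C_0(\Omega,\mcal{X})$.

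For the main step I fix $\delta>0$ and $F\in C_0(\Omega,\mcal{Y})$ with $\|F\|_{\infty}\leq1$, and construct $G\in C_0(\Omega,\mcal{Z})$ with $\|G\|_{\infty}\leq1$ and $\|F-G\|_{\infty}\leq d+3\delta$. Since $F$ vanishes at infinity and is continuous, $K:=\{\omega\in\Omega:\|F(\omega)\|\geq\delta\}$ is compact; covering each point of $K$ by an open set on which $F$ has oscillation less than $\delta$ and extracting a finite subcover $U_1,\dots,U_n$ of $K$, I choose $\omega_i\in U_i$ and a partition of unity $\phi_1,\dots,\phi_n\in C_c(\Omega)$ with $0\leq\phi_i\leq1$, $\mathrm{supp}\,\phi_i\subseteq U_i$, $\sum_i\phi_i\leq1$ on $\Omega$, and $\sum_i\phi_i\equiv1$ on $K$. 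As $F(\omega_i)\in B_1(\mcal{Y})$, the definition of $d_{KK}$ furnishes $z_i\in B_1(\mcal{Z})$ with $\|F(\omega_i)-z_i\|\leq d+\delta$ (in the $d_0$ case one instead picks $z_i\in\mcal{Z}$, unconstrained in norm, with $\|F(\omega_i)-z_i\|\leq d+\delta$); then $G:=\sum_i\phi_i z_i$ lies in $C_c(\Omega,\mcal{Z})\subseteq C_0(\Omega,\mcal{Z})$ and $\|G(\omega)\|\leq\sum_i\phi_i(\omega)\|z_i\|\leq1$. Writing $F(\omega)-G(\omega)=\bigl(1-\sum_i\phi_i(\omega)\bigr)F(\omega)+\sum_i\phi_i(\omega)\bigl(F(\omega)-z_i\bigr)$, the first summand is $0$ on $K$ and has norm $<\delta$ off $K$, while for every $i$ with $\phi_i(\omega)\neq0$ one has $\omega\in U_i$, so $\|F(\omega)-z_i\|\leq\|F(\omega)-F(\omega_i)\|+\|F(\omega_i)-z_i\|<\delta+(d+\delta)$; hence $\|F-G\|_{\infty}\leq d+3\delta$.

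Letting $\delta\downarrow0$ gives $d\bigl(F,B_1(C_0(\Omega,\mcal{Z}))\bigr)\leq d_{KK}(\mcal{Y},\mcal{Z})$ for every $F\in B_1(C_0(\Omega,\mcal{Y}))$, and this together with the symmetric statement yields the $d_{KK}$ inequality; the same construction (with the unconstrained $z_i$) shows that $\mcal{Y}\subseteq_{\gamma}\mcal{Z}$ implies $C_0(\Omega,\mcal{Y})\subseteq_{\gamma+3\delta}C_0(\Omega,\mcal{Z})$ for all $\gamma>0$ and $\delta>0$, which, since $d_0$ is an infimum, yields the $d_0$ inequality. I expect the one genuine difficulty to be that a naive pointwise selection $\omega\mapsto z_\omega\in B_1(\mcal{Z})$ with $\|F(\omega)-z_\omega\|$ near $d$ carries no continuity in $\omega$; the partition-of-unity patching above is exactly the device that converts this pointwise data into a bona fide element of $C_0(\Omega,\mcal{Z})$ while simultaneously preserving the unit-ball bound and the vanishing at infinity, at the cost of the harmless extra $3\delta$, and this is where the work lies. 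The only remaining point — the compatibility in the first paragraph of the two applications of \Cref{identification} with the subspace inclusion $C_0(\Omega,\mcal{Y})\hookrightarrow C_0(\Omega,\mcal{X})$ — is routine, since elementary tensors are sent to finite-rank functions on both sides.
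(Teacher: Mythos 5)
Your proof is correct and complete. The paper itself gives no argument for this theorem --- it simply asserts that the proof ``follows verbatim'' from that of \cite[Theorem 4.6]{GK2} (the analogous statement for $\otimes^{\min}$ with a commutative $C^*$-algebra) --- so there is no line-by-line comparison to make; your route via the isometric identification $\mcal{W}\otimes^{\varepsilon}C_0(\Omega)\cong C_0(\Omega,\mcal{W})$ of \Cref{identification}, followed by the partition-of-unity patching of pointwise approximants $z_i$ into a genuine element of $C_0(\Omega,\mcal{Z})$, is the natural (and essentially forced) argument, and you correctly isolate the only real issue, namely the lack of continuity in a pointwise selection $\omega\mapsto z_\omega$, as well as the one difference between the two metrics (for $d_{KK}$ the $z_i$ must be taken in $B_1(\mcal{Z})$ so that $\|G\|_\infty\le 1$, for $d_0$ they need not be). All the auxiliary steps check out: the compatibility of the two applications of \Cref{identification} holds because both isometries agree on elementary tensors, and the error bookkeeping $(1-\sum_i\phi_i)F+\sum_i\phi_i(F-z_i)$ gives $\|F-G\|_\infty\le d+3\delta$ (in fact $d+2\delta$), which suffices after letting $\delta\downarrow 0$.
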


For the Banach space $C(K)$ of continuous functions on a compact Hausdorff space
 $K$, we have stability for both Kadison-Kastler and Christensen distances.

\begin{theorem}\label{equality}
Let $\mcal{X}$ be a Banach space and $C(K)$ denote the Banach space
consisting of continuous functions on a compact Hausdorff space
$K$. Then, for any two closed subspaces $\mcal{Y}$ and $\mcal{Z}$ of
$\mcal{X}$,
\[
d_{KK}(\mcal{Y}, \mcal{Z})= d_{KK}(\mcal{Y}\otimes^{\varepsilon} C(K),
\mcal{Z}\otimes^{\varepsilon} C(K))
\]
and      
\[
 d_{0}(\mcal{Y}, \mcal{Z})= d_{0}(\mcal{Y}\otimes^{\varepsilon} C(K),
 \mcal{Z}\otimes^{\varepsilon} C(K).
\]

\end{theorem}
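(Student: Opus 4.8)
The plan is to deduce \Cref{equality} from \Cref{reverse inequality} by establishing the two reverse inequalities
\[
d_{KK}(\mcal{Y}, \mcal{Z}) \leq d_{KK}\left(\mcal{Y}\otimes^{\varepsilon} C(K), \mcal{Z}\otimes^{\varepsilon} C(K)\right), \qquad d_{0}(\mcal{Y}, \mcal{Z}) \leq d_{0}\left(\mcal{Y}\otimes^{\varepsilon} C(K), \mcal{Z}\otimes^{\varepsilon} C(K)\right).
\]
Since $K$ is compact, $C(K)$ is unital; fix a point $k_0 \in K$ and consider the evaluation functional $\delta_{k_0} \in C(K)^*$, which has norm $1$ and satisfies $\delta_{k_0}(\1) = 1$. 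The idea is that evaluation at $k_0$ furnishes a norm-one projection of the tensor product back onto the original space. Concretely, using \Cref{identification} to identify $\mcal{Y}\otimes^{\varepsilon} C(K)$ with the space $C(K, \mcal{Y})$ of continuous $\mcal{Y}$-valued functions on $K$ (and similarly for $\mcal{Z}$), the map $\mathrm{ev}_{k_0}\colon f \mapsto f(k_0)$ is a norm-one linear surjection $C(K,\mcal{Y}) \to \mcal{Y}$; in tensor notation this is $\mathrm{id}_{\mcal{Y}} \otimes^{\varepsilon} \delta_{k_0}$ composed with the identification $\mcal{Y} \otimes^{\varepsilon} \C \cong \mcal{Y}$, and \Cref{bounded maps} guarantees it has norm $\|\mathrm{id}_{\mcal{Y}}\|\,\|\delta_{k_0}\| = 1$. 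Moreover it admits the isometric right inverse $y \mapsto \1 \otimes y$ (the constant function), so $\mathrm{ev}_{k_0}$ maps $B_1(\mcal{Y}\otimes^{\varepsilon}C(K))$ onto $B_1(\mcal{Y})$ and likewise for $\mcal{Z}$.

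Given these maps, the reverse inequalities follow by a standard Hausdorff-distance transport argument. For the Christensen distance: suppose $\mcal{Y}\otimes^{\varepsilon}C(K) \subseteq_{\gamma} \mcal{Z}\otimes^{\varepsilon}C(K)$. Take $y \in B_1(\mcal{Y})$; then $\1 \otimes y \in B_1(\mcal{Y}\otimes^{\varepsilon}C(K))$, so there is $w \in \mcal{Z}\otimes^{\varepsilon}C(K)$ with $\|\1 \otimes y - w\| \leq \gamma$. Applying the norm-one map $\mathrm{ev}_{k_0}$ and noting $\mathrm{ev}_{k_0}(\1 \otimes y) = y$ and $\mathrm{ev}_{k_0}(w) \in \mcal{Z}$ (since $w$ lies in $C(K,\mcal{Z})$, its value at $k_0$ is in $\mcal{Z}$; one should check that $\mathrm{ev}_{k_0}$ carries $\mcal{Z}\otimes^{\varepsilon}C(K)$ into $\mcal{Z}$, which is immediate on the dense algebraic tensor product and extends by continuity since $\mcal{Z}$ is closed), we get $\|y - \mathrm{ev}_{k_0}(w)\| \leq \gamma$. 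Hence $\mcal{Y} \subseteq_{\gamma} \mcal{Z}$, and by symmetry $\mcal{Z} \subseteq_{\gamma} \mcal{Y}$, giving $d_0(\mcal{Y},\mcal{Z}) \leq d_0(\mcal{Y}\otimes^{\varepsilon}C(K), \mcal{Z}\otimes^{\varepsilon}C(K))$. For the Kadison-Kastler distance the argument is the same with $\gamma$ replaced by the relevant suprema: if $y \in B_1(\mcal{Y})$, pick $w \in B_1(\mcal{Z}\otimes^{\varepsilon}C(K))$ nearly realizing $d(\1\otimes y, B_1(\mcal{Z}\otimes^{\varepsilon}C(K)))$, and then $\mathrm{ev}_{k_0}(w) \in B_1(\mcal{Z})$ is within that distance of $y$; combined with the symmetric estimate this bounds $d_{KK}(\mcal{Y},\mcal{Z})$ by $d_{KK}(\mcal{Y}\otimes^{\varepsilon}C(K), \mcal{Z}\otimes^{\varepsilon}C(K))$.

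Combining the two reverse inequalities with \Cref{reverse inequality} (applied with $\Omega = K$, for which $C_0(\Omega) = C(K)$) yields the claimed equalities. The only genuinely delicate point is verifying that the evaluation map sends the \emph{closed} tensor product $\mcal{Z}\otimes^{\varepsilon}C(K)$ into $\mcal{Z}$ and not merely into $\overline{\mcal{Z}}$ (here harmless since $\mcal{Z}$ is assumed closed) — this is handled by the identification in \Cref{identification}, under which $\mcal{Z}\otimes^{\varepsilon}C(K) \cong C(K,\mcal{Z})$ and evaluation is manifestly $\mcal{Z}$-valued — together with checking that $\mathrm{ev}_{k_0}$ really does surject the unit balls onto each other, which is where the constant-function section $y \mapsto \1\otimes y$, available precisely because $K$ is compact so that $\1 \in C(K)$ has norm $1$, is essential. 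Everything else is routine bookkeeping with norm-one maps and the definition of Hausdorff distance.
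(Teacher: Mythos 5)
Your proposal is correct and takes essentially the same route as the paper: embed via the isometric unital section $y\mapsto y\otimes\mathbbm{1}$ and slice back with a norm-one functional on $C(K)$ that is $1$ at the unit (the paper uses an arbitrary state $\phi$ where you use the point evaluation $\delta_{k_0}$, an inessential specialization). The transport of the near-inclusion back to $\mcal{Y},\mcal{Z}$, including the check that the slice map lands in $B_1(\mcal{Z})$ for the Kadison--Kastler case, matches the paper's argument.
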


\begin{proof}
  We give a proof only for the Christensen distance as the proof for
  the Kadison-Kastler distance is similar.

  In view of \Cref{reverse
    inequality}, it just remains to show that
\[
d_{0}(\mcal{Y}, \mcal{Z})\leq d_{0}(\mcal{Y}\otimes^{\varepsilon} C(K), \mcal{Z}\otimes^{\varepsilon} C(K)). 
\]

First note that the natural isometry $\theta$ from $\mcal{X}$ onto
$\mcal{X}\otimes^{\varepsilon}\mathbb{C}\mathbbm{1}$ maps $\mcal{Y}$
and $\mcal{Z}$ onto
$\mcal{Y}\otimes^{\varepsilon}\mathbb{C}\mathbbm{1}$ and
$\mcal{Z}\otimes^{\varepsilon}\mathbb{C}\mathbbm{1}$ respectively,
where $\mathbbm{1}$ is the constant function taking the value $1$ on
$K$. 

Let $\epsilon>0$
and fix a $\gamma_{0}>0$ such that
\[
d_{0}(\mcal{Y} \otimes^{\varepsilon} C(K), \mcal{Z}\otimes^{\varepsilon} C(K))<  \gamma_{0} < d_{0}(\mcal{Y} \otimes^{\varepsilon} C(K), \mcal{Z}\otimes^{\varepsilon} C(K)) + \epsilon.
\]
This implies that $\mcal{Y} \otimes^{\varepsilon}
C(K)\subseteq_{\gamma_{0}}\mcal{Z}\otimes^{\varepsilon} C(K)$ and that
$\mcal{Z} \otimes^{\varepsilon}
C(K)\subseteq_{\gamma_{0}}\mcal{Y}\otimes^{\varepsilon} C(K)$.
 
Let $y \in B_{1}(\mcal{Y})$. Then, $\theta(y)= y\otimes \mathbbm{1}
\in B_{1}(\mcal{Y} \otimes^{\varepsilon}
\mathbb{C}\mathbbm{1})\subseteq B_{1}(\mcal{Y} \otimes^{\varepsilon}
C(K))$; so, there exists a $z\in \mcal{Z} \otimes^{\varepsilon} C(K)$
such that $\|y\otimes \mathbbm{1}-z\|_{\varepsilon}\leq \gamma_{0}$.
Fix a state $\phi$ on $C(K)$ and, by \Cref{bounded maps}, consider the
natural map
$$ \mathrm{id}_{\mcal{X}}\otimes^{\varepsilon} \phi \mathbbm{1}:
\mcal{X} \otimes^{\varepsilon} C(K) \to \mcal{X} \otimes^{\varepsilon}
\mathbb{C}\mathbbm{1},
$$ with $\|\mathrm{id}_{\mcal{X}}\otimes^{\varepsilon} \phi\|=
\|\mathrm{id}_{\mcal{X}}\|\| \phi \|= 1$.  Clearly, it maps
$\mcal{Z}\otimes^{\varepsilon} C(K)$ onto
$\mcal{Z}\otimes^{\varepsilon} \mathbb{C}\mathbbm{1}$. Thus, $z_{0}:=
(\mathrm{id}_{\mcal{X}}\otimes^{\varepsilon} \phi \mathbbm{1})(z)\in
\mcal{Z} \otimes^{\varepsilon} \mathbb{C}\mathbbm{1}$; so that $\theta^{-1}(z_0) \in \mcal{Z}$  and
\begin{eqnarray*}
\|y-\theta^{-1}(z_{0})\| &=& \|\theta^{-1}(y\otimes \mathbbm{1})-\theta^{-1}(z_{0})\| \\
&=& \|y\otimes \mathbbm{1}- z_{0}\|_{\varepsilon}\\
&=&\|(\mathrm{id}_{\mcal{X}}\otimes^{\varepsilon} \phi \mathbbm{1})(y\otimes \mathbbm{1}-z)\|_{\varepsilon}\\
&\leq & \|y\otimes \mathbbm{1}-z\|_{\varepsilon}\\
& \leq & \gamma_{0}.
\end{eqnarray*}
So, $\mcal{Y} \subseteq_{\gamma_{0}}\mcal{Z}$.
Similarly,  $\mcal{Z}\subseteq_{\gamma_{0}}\mcal{Y}$; 
so that 
\[
d_{0}(\mcal{Y}, \mcal{Z})\leq \gamma_{0}< d_{0}(\mcal{Y} \otimes^{\varepsilon} C(K), \mcal{Z}\otimes^{\varepsilon} C(K))+ \epsilon.
\]
Since $\epsilon>0$ was arbitrary, it follows that
\(
d_{0}(\mcal{Y}, \mcal{Z})\leq  d_{0}(\mcal{Y} \otimes^{\varepsilon} C(K), \mcal{Z}\otimes^{\varepsilon} C(K)).
\) 
\end{proof}

\section{Distance between subalgebras of Banach space projective tensor product of $C^*$-algebras}\label{projective}
\subsection{Banach space projective tensor product}
\begin{definition}
Let $\mcal{X}$ and $\mcal{Y}$ be Banach spaces and $\mcal{X}\otimes \mcal{Y}$ denote their algebraic tensor product. For any element 
$u \in \mcal{X}\otimes \mcal{Y}$, its projective norm is given by 
$$
\|u\|_{\gamma} = \inf \left\{
  \sum_{i=1}^{n}\|x_{i}\|\|y_{i}\|\,\, :\,\,
 u= \sum_{i=1}^{n} x_{i}\otimes y_{i}
  \right\}.
$$ The completion of $\mcal{X}\otimes \mcal{Y}$ with respect to the
  projective norm is denoted by $\mcal{X} \otimes^{\gamma} \mcal{Y}$
  and is known as the projective tensor product of the Banach spaces
  $\mcal{X}$ and $\mcal{Y}$.
\end{definition}

\begin{remark}
  Let $\mC$ and $\mD$ be two $C^*$-algebras. Then, it is easily seen (and well-known) that their projective
  tensor product $\mC \otimes^{\gamma}\mD$ naturally inherits the
  structure of a Banach $*$-algebra.  
\end{remark}

The following remark is similar to \Cref{bounded maps} - see
\cite[Proposition 2.3]{R} for a proof.
\begin{remark}\label{P bounded maps}
Let $\mcal{X},\mcal{Y},\mcal{Z}$ and $\mcal{W}$ be the normed spaces
and $T_{1}:\mcal{X}\rightarrow \mcal{W} $ and $T_{2}:
\mcal{Y}\rightarrow \mcal{Z}$ be bounded linear maps. Then, there
exists a unique bounded linear map $T_{1}\otimes^{\gamma} T_{2}:
\mcal{X} \otimes^{\gamma} \mcal{Y} \rightarrow \mcal{W}
\otimes^{\gamma} \mcal{Z} $ such that $(T_{1}\otimes^{\gamma}
T_{2})(x\otimes y)= T_{1}(x)\otimes T_{2}(y)$ for every $x\in
\mcal{X}, y\in \mcal{Y}$. Also, $\|T_{1}\otimes^{\gamma} T_{2}\|=
\|T_{1}\|\|T_{2}\|$.
\end{remark}

The following well-known result proves to be a very important tool to
compare the distance between subalgebras of a normed algebra and the
distance between their tensor products with a fixed normed algebra -
see \cite[Proposition 2.2]{R} for a proof.
\begin{proposition}{\label{unit-ball}}
Let $\mcal{X}$ and $\mcal{Y}$ be Banach spaces.  Then the closed unit
ball $B_{1}(\mcal{X} \otimes^{\gamma} \mcal{Y})$ of $\mcal{X}
\otimes^{\gamma} \mcal{Y}$ is the closed convex hull of the set
$B_{1}(\mcal{X})\otimes B_{1}(\mcal{Y})$.
\end{proposition}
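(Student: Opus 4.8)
The plan is to establish the two inclusions
\[
\overline{\mathrm{conv}}\bigl(B_{1}(\mcal{X})\otimes B_{1}(\mcal{Y})\bigr)\subseteq B_{1}(\mcal{X}\otimes^{\gamma}\mcal{Y})
\quad\text{and}\quad
B_{1}(\mcal{X}\otimes^{\gamma}\mcal{Y})\subseteq \overline{\mathrm{conv}}\bigl(B_{1}(\mcal{X})\otimes B_{1}(\mcal{Y})\bigr),
\]
where all closures are taken with respect to the projective norm $\|\cdot\|_{\gamma}$ on $\mcal{X}\otimes^{\gamma}\mcal{Y}$; write $D:=B_{1}(\mcal{X})\otimes B_{1}(\mcal{Y})$ for brevity. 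The first inclusion is the routine one: for $x\in B_{1}(\mcal{X})$ and $y\in B_{1}(\mcal{Y})$ the very definition of the projective norm as an infimum gives $\|x\otimes y\|_{\gamma}\leq\|x\|\,\|y\|\leq 1$, so $D\subseteq B_{1}(\mcal{X}\otimes^{\gamma}\mcal{Y})$; since the closed unit ball of a normed space is both closed and convex, it must contain $\overline{\mathrm{conv}}(D)$.

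For the reverse inclusion I would first work inside the algebraic tensor product $\mcal{X}\otimes\mcal{Y}$ and treat elements of norm strictly less than $1$. Given such a $u$ with $\|u\|_{\gamma}<1$, the infimum defining the projective norm furnishes a representation $u=\sum_{i=1}^{n}x_{i}\otimes y_{i}$ with $s:=\sum_{i=1}^{n}\|x_{i}\|\,\|y_{i}\|<1$; discarding vanishing terms, I may assume each $x_{i},y_{i}\neq 0$. The key manipulation is to renormalise: setting $\lambda_{i}:=\|x_{i}\|\,\|y_{i}\|$ and $\widetilde{x}_{i}:=x_{i}/\|x_{i}\|$, $\widetilde{y}_{i}:=y_{i}/\|y_{i}\|$, one has $\widetilde{x}_{i}\otimes\widetilde{y}_{i}\in D$ and $u=\sum_{i}\lambda_{i}\,(\widetilde{x}_{i}\otimes\widetilde{y}_{i})$. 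Since the coefficients sum to $s<1$ rather than to $1$, I would exploit that $0=0\otimes 0\in D$ to pad the combination, writing $u=\sum_{i}\lambda_{i}(\widetilde{x}_{i}\otimes\widetilde{y}_{i})+(1-s)\cdot 0$, which is now a genuine convex combination of elements of $D$. Hence $u\in\mathrm{conv}(D)$, showing that the open unit ball of the algebraic tensor product lies in $\mathrm{conv}(D)$.

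It then remains to pass from this to the closed unit ball of the completed space, which is where the main care is needed. For an algebraic tensor $u$ with $\|u\|_{\gamma}\leq 1$ (allowing equality), I would apply the previous step to $(1-\epsilon)u$, whose norm is strictly less than $1$, and let $\epsilon\downarrow 0$ to conclude $u\in\overline{\mathrm{conv}}(D)$. Finally, for a general $u\in\mcal{X}\otimes^{\gamma}\mcal{Y}$ in the completion with $\|u\|_{\gamma}\leq 1$, I would pick algebraic tensors $u_{n}\to u$ and replace each by its normalisation $v_{n}:=u_{n}/\max(1,\|u_{n}\|_{\gamma})$, which lies in the closed unit ball of the algebraic tensor product and hence in $\overline{\mathrm{conv}}(D)$ by the preceding case; because $\|u_{n}\|_{\gamma}\to\|u\|_{\gamma}\leq 1$ the scaling factors tend to $1$, so $v_{n}\to u$, and the closedness of $\overline{\mathrm{conv}}(D)$ forces $u\in\overline{\mathrm{conv}}(D)$. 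The only genuine obstacle is the bookkeeping at the boundary $\|u\|_{\gamma}=1$ together with the interchange between the algebraic tensor product and its completion; the algebraic heart of the whole argument is the single normalisation-and-padding step carried out in the second paragraph.
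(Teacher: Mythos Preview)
Your argument is correct. Note, however, that the paper does not supply its own proof of this proposition: it merely records the result and refers the reader to \cite[Proposition 2.2]{R}. The normalisation-and-padding manoeuvre you give is exactly the standard proof found in Ryan's book, so there is nothing to compare beyond observing that you have reproduced the cited argument in full detail.
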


In general, given a subspace $\mcal{Z}$ of $\mcal{X}$, the projective
tensor product $\mathcal{Z} \otimes^{\gamma} \mathcal{Y}$ cannot be
identified isometrically with a subspace of $\mathcal{X}
\otimes^{\gamma} \mathcal{Y}$.  However, if we start with
$C^*$-algebras, we have the following positive result.

\begin{proposition}\cite[Theorem 2.6]{GR}\label{injectivity}
Let $\mC$ and $\mD$ be two $C^*$-algebras and $\mA$ be a
$C^*$-subalgebra of $\mC$. Then the identity map on $\mA \otimes \mD$
extends to an isometric $*$-algebra map from $\mA \otimes^{\gamma}\mD$
onto the closed $*$-subalgebra $\overline{\mA \otimes
  \mD}^{\|\cdot\|_{\gamma}}$ of $\mC\otimes^{\gamma} \mD$.
\end{proposition}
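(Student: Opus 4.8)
\noindent\emph{Proof idea (this is quoted as \cite[Theorem 2.6]{GR}; here is the route I would take).} The plan is to strip off the formal parts and then reduce the isometry claim to a Hahn--Banach-type extension problem for bilinear forms. By \Cref{P bounded maps}, applied to the inclusion $\iota\colon\mA\hookrightarrow\mC$ and $\mathrm{id}_{\mD}$, the map $\iota\otimes^{\gamma}\mathrm{id}_{\mD}\colon\mA\otimes^{\gamma}\mD\to\mC\otimes^{\gamma}\mD$ is a well-defined contraction; being the identity on the dense $*$-subalgebra $\mA\otimes\mD$, it is a $*$-algebra homomorphism with range dense in $\overline{\mA\otimes\mD}^{\|\cdot\|_{\gamma}}$. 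Contractivity already gives $\|u\|_{\mC\otimes^{\gamma}\mD}\le\|u\|_{\mA\otimes^{\gamma}\mD}$ for $u\in\mA\otimes\mD$, since in computing the projective norm inside $\mC$ one may use strictly more representations of $u$. So the whole content is the reverse inequality $\|u\|_{\mA\otimes^{\gamma}\mD}\le\|u\|_{\mC\otimes^{\gamma}\mD}$ for $u\in\mA\otimes\mD$; once this is in hand, $\iota\otimes^{\gamma}\mathrm{id}_{\mD}$ is isometric, hence injective with closed range $\overline{\mA\otimes\mD}^{\|\cdot\|_{\gamma}}$, which is precisely the assertion. (Note that unitality of $\mD$ is irrelevant for this statement.)

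I would then dualise. Using the isometric identification $(\mcal{X}\otimes^{\gamma}\mcal{Y})^{*}\cong\mcal{B}(\mcal{X}\times\mcal{Y})$ with the space of bounded bilinear forms, one has $\|u\|_{\mA\otimes^{\gamma}\mD}=\sup\{\,|V(u)|:V\in\mcal{B}(\mA\times\mD),\ \|V\|\le1\,\}$, and similarly with $\mC$ in place of $\mA$. Thus the inequality to be proved becomes equivalent to the statement that \emph{every bounded bilinear form on $\mA\times\mD$ extends to a bounded bilinear form on $\mC\times\mD$ of the same norm}: for then, writing $\widetilde V$ for such an extension of $V$, one gets $|V(u)|=|\widetilde V(u)|\le\|u\|_{\mC\otimes^{\gamma}\mD}$ for $u\in\mA\otimes\mD$, and the supremum over $V$ gives $\|u\|_{\mA\otimes^{\gamma}\mD}\le\|u\|_{\mC\otimes^{\gamma}\mD}$.

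The hard part is exactly this extension, and it is where the $C^{*}$-structure must be used, since the projective tensor product fails to be injective for general Banach spaces, so no soft argument can work. My plan is to invoke the structure theory of bounded bilinear forms on $C^{*}$-algebras (Haagerup's and Pisier's noncommutative Grothendieck inequality): a bounded bilinear form $V$ on $\mA\times\mD$ admits a ``two-sided'' representation through $*$-representations of $\mA$ and of $\mD$ together with vectors, schematically $V(a,d)=\langle\pi_{1}(a)\xi_{1},\rho_{1}(d)^{*}\eta_{1}\rangle+\langle\rho_{2}(d)\xi_{2},\pi_{2}(a)^{*}\eta_{2}\rangle$ on common Hilbert spaces, with the vector data controlled so as to recover $\|V\|$ optimally. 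Each $*$-representation $\pi_{i}$ of $\mA$ is a direct sum of GNS representations $\pi_{f}$; extending every state $f$ of $\mA$ occurring here to a state $\widetilde f$ of $\mC$ (always possible) and setting $\widetilde\pi_{i}:=\bigoplus\pi_{\widetilde f}$ yields a $*$-representation of $\mC$ that contains $\pi_{i}$ as a subrepresentation of $\widetilde\pi_{i}|_{\mA}$, the relevant vectors lying in the corresponding invariant subspace. Substituting $\widetilde\pi_{i}$ for $\pi_{i}$ in the formula (and leaving $\rho_{i},\xi_{i},\eta_{i}$ alone) defines $\widetilde V$ on $\mC\times\mD$; it restricts to $V$ on $\mA\times\mD$, and $\|\widetilde V\|\le\|V\|$ because the only quantities altered are the operator norms $\|\widetilde\pi_{i}(c)\|\le\|c\|$ while the vector bookkeeping is untouched (the bound $\|\widetilde V\|\ge\|V\|$ is immediate). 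This completes the scheme. I expect this Grothendieck-type extension to be the real obstacle; an alternative, equally $C^{*}$-specific route would combine the elementary isometric embedding $\mcal{X}\otimes^{\gamma}\mcal{Y}\hookrightarrow\mcal{X}^{**}\otimes^{\gamma}\mcal{Y}^{**}$ with the existence of a contractive linear projection $\mC^{**}\to\mA^{**}$ fixing $\mA$, which is then the crux.
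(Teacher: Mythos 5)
First, a point of comparison: the paper offers no proof of this Proposition at all --- it is imported verbatim from \cite[Theorem 2.6]{GR} --- so there is no in-house argument to measure yours against. Your formal reductions are correct and are surely how any proof must begin: $\iota\otimes^{\gamma}\mathrm{id}_{\mD}$ is a contractive $*$-homomorphism with dense range in $\overline{\mA\otimes\mD}^{\|\cdot\|_{\gamma}}$, and by duality the entire content is that every bounded bilinear form on $\mA\times\mD$ extends to one on $\mC\times\mD$ of the same norm. The gap is in how you produce that extension. The representation $V(a,d)=\langle\pi_{1}(a)\xi_{1},\rho_{1}(d)^{*}\eta_{1}\rangle+\langle\rho_{2}(d)\xi_{2},\pi_{2}(a)^{*}\eta_{2}\rangle$ ``with the vector data controlled so as to recover $\|V\|$ optimally'' is not a theorem. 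Every form of this type is a functional on the symmetrized Haagerup tensor product of norm at most $\|\xi_{1}\|\|\eta_{1}\|+\|\xi_{2}\|\|\eta_{2}\|$; asserting that every $V$ with $\|V\|\le 1$ admits such a representation with $\sum_{i}\|\xi_{i}\|\|\eta_{i}\|\le 1$ says that the Banach projective and (symmetrized) Haagerup norms coincide isometrically on $\mA\otimes\mD$. Already for commutative $C^{*}$-algebras a row/column Cauchy--Schwarz estimate shows this would force the complex Grothendieck constant to equal $1$, which it does not. What the Haagerup--Pisier noncommutative Grothendieck theorem actually provides is such a representation (equivalently, a factorization through states $f_{1},f_{2},g_{1},g_{2}$ via $|V(a,d)|\le\|V\|(f_{1}(a^{*}a)+f_{2}(aa^{*}))^{1/2}(g_{1}(dd^{*})+g_{2}(d^{*}d))^{1/2}$) only up to a universal constant $K>1$; feeding that into your extension scheme (extend the states to $\mC$, project orthogonally onto the closure of the image of $\mA$) yields $\|\widetilde V\|\le K\|V\|$, because the seminorm $c\mapsto(\widetilde f_{1}(c^{*}c)+\widetilde f_{2}(cc^{*}))^{1/2}$ is only dominated by $\sqrt{2}\,\|c\|$. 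That gives a bicontinuous embedding of $\mA\otimes^{\gamma}\mD$ into $\mC\otimes^{\gamma}\mD$, which is strictly weaker than the isometric statement being proved.

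Your fallback route fails for a different and equally concrete reason: a contractive linear projection of $\mC^{**}$ onto $\mA^{**}$ is automatically a conditional expectation by Tomiyama's theorem, and composing it with the canonical map $\mC\to\mC^{**}$ would produce a weak expectation for $\mA\subseteq\mC$; by Lance's theorem this is impossible for, e.g., $\mA=C^{*}_{r}(F_{2})\subseteq\mC=B(\ell^{2}(F_{2}))$. So the object you call ``the crux'' simply does not exist in general. In short: the easy inequality, the $*$-homomorphism and density formalities, and the reduction to norm-preserving extension of bilinear forms are all fine, but neither of your two proposed mechanisms for the extension is valid, and the isometric (as opposed to isomorphic) conclusion is exactly what they fail to deliver. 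Whatever argument \cite{GR} uses must be something genuinely different from a constant-$1$ Grothendieck factorization or a projection onto $\mA^{**}$.
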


The following proposition is the natural analogue of \cite[Proposition
  4.1]{GK2}.
\begin{proposition}
Let $\mC$ and $\mD$ be $C^*$-algebras. Let $\mA$ and $\mB$ be
$C^*$-subalgebras of $\mC$, and let $\mP$ be a $C^*$-subalgebra of
$\mD$. If there exists a conditional expectation from $\mD$ onto
$\mP$, then
\[
d_{0}(\mA \otimes^{\gamma} \mP, \mB\otimes^{\gamma} \mP)\leq d_{0}(\mA
\otimes^{\gamma} \mD, \mB\otimes^{\gamma} \mD)
\]
and 
\[
d_{KK}(\mA \otimes^{\gamma} \mP, \mB\otimes^{\gamma} \mP)\leq
d_{KK}(\mA \otimes^{\gamma} \mD, \mB\otimes^{\gamma} \mD).
\]
\end{proposition}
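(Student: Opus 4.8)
The plan is to exhibit a contractive, unital (when applicable) $*$-homomorphism $\Phi : \mC \otimes^{\gamma} \mD \to \mC \otimes^{\gamma} \mP$ that carries $\mA \otimes^{\gamma} \mD$ onto $\mA \otimes^{\gamma} \mP$ and $\mB \otimes^{\gamma} \mD$ onto $\mB \otimes^{\gamma} \mP$, and restricts to the identity on $\mC \otimes^{\gamma} \mP$. Given such a $\Phi$, both inequalities follow by the standard argument: if $E : \mD \to \mP$ is the conditional expectation, set $\Phi = \mathrm{id}_{\mC} \otimes^{\gamma} E$, which exists and is contractive by \Cref{P bounded maps} since $\|E\| = 1$. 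First I would check that $\Phi$ maps $\mA \otimes \mD$ into $\mA \otimes \mP$ on elementary tensors, hence (by continuity and \Cref{injectivity}) maps $\mA \otimes^{\gamma} \mD$ into $\overline{\mA \otimes \mP}^{\|\cdot\|_\gamma} \cong \mA \otimes^{\gamma} \mP$; and since $E$ is surjective (being a conditional expectation onto $\mP$, it fixes $\mP$), $\Phi$ maps $\mA \otimes^{\gamma} \mD$ \emph{onto} $\mA \otimes^{\gamma} \mP$. The same holds with $\mB$ in place of $\mA$, and with $\mC$ in place of $\mA$.

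Next I would run the distance estimate. For the Christensen distance: let $\gamma_0 > d_0(\mA \otimes^{\gamma} \mD, \mB \otimes^{\gamma} \mD)$, so $\mA \otimes^{\gamma} \mD \subseteq_{\gamma_0} \mB \otimes^{\gamma} \mD$ and symmetrically. Given $a \in B_1(\mA \otimes^{\gamma} \mP)$, view $a$ as an element of $B_1(\mA \otimes^{\gamma} \mD)$ via the inclusion $\mP \hookrightarrow \mD$ (isometric on the projective tensor product by \Cref{injectivity}), pick $b \in \mB \otimes^{\gamma} \mD$ with $\|a - b\|_\gamma \le \gamma_0$, and apply $\Phi$: since $\Phi(a) = a$ and $\Phi(b) \in \mB \otimes^{\gamma} \mP$, contractivity gives $\|a - \Phi(b)\|_\gamma \le \gamma_0$. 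Hence $\mA \otimes^{\gamma} \mP \subseteq_{\gamma_0} \mB \otimes^{\gamma} \mP$; the reverse inclusion is identical, and letting $\gamma_0$ decrease to the infimum yields the first inequality. For the Kadison-Kastler distance the argument is the same, except one must check that $\Phi(b)$ can be taken in $B_1(\mB \otimes^{\gamma} \mP)$: this is automatic since $\|\Phi(b)\|_\gamma \le \|b\|_\gamma \le 1$ because $b \in B_1(\mB \otimes^{\gamma} \mD)$ and $\Phi$ is contractive — so the nearest-point-in-the-unit-ball formulation goes through verbatim.

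The one point requiring care — and the likely main obstacle — is the identification of $\mA \otimes^{\gamma} \mP$ with its image under $\Phi$ inside $\mC \otimes^{\gamma} \mP$, i.e.\ ensuring that the various copies of the projective tensor products sit compatibly. Concretely, one needs that the composite $\mP \hookrightarrow \mD \xrightarrow{E} \mP$ is the identity (true by definition of a conditional expectation), that the inclusion $\mA \otimes^{\gamma} \mP \hookrightarrow \mA \otimes^{\gamma} \mD$ induced by $\mP \hookrightarrow \mD$ is isometric (which is exactly \Cref{injectivity} applied with the roles of the tensor factors swapped, using that the projective tensor norm is symmetric), and that the restriction of $\Phi$ to $\mA \otimes^{\gamma} \mD$ lands in and surjects onto the copy of $\mA \otimes^{\gamma} \mP$. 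None of these is deep, but they must be assembled carefully so that "$\Phi(a) = a$" in the estimate above is a genuine equality of norms and not merely up to an isometry one has to track. Once this bookkeeping is pinned down, the proof is a routine transport of the distance inequality across the contractive surjection $\Phi$, and both displayed inequalities drop out simultaneously.
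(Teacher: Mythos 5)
Your proposal is correct and follows essentially the same route as the paper: both identify $\mA\otimes^{\gamma}\mP$ and $\mB\otimes^{\gamma}\mP$ isometrically inside $\mC\otimes^{\gamma}\mD$ via \Cref{injectivity}, apply the contractive map $\mathrm{id}_{\mC}\otimes^{\gamma}E$ from \Cref{P bounded maps} to push a $\gamma_{0}$-approximant from $\mB\otimes^{\gamma}\mD$ down into $\mB\otimes^{\gamma}\mP$ while fixing $\mC\otimes^{\gamma}\mP$, and then take the infimum over admissible $\gamma_{0}$. The bookkeeping points you flag (symmetry of the projective norm, $E$ restricting to the identity on $\mP$) are exactly the ones the paper relies on implicitly, so no gap remains.
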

\begin{proof}
Let $E: \mD \to \mP$ be a conditional expectation. By
\Cref{injectivity}, we can identify $\mA \otimes^{\gamma} \mP$ and
$\mB \otimes^{\gamma} \mP$ with Banach $*$-subalgebras of $\mC
\otimes^{\gamma} \mD$.  Let $\epsilon>0$ and fix a $\gamma_{0}>0$ such
that
\[
d_{0}(\mA \otimes^{\gamma} \mD, \mB\otimes^{\gamma} \mD)< \gamma_{0} <
d_{0}(\mA \otimes^{\gamma} \mD, \mB\otimes^{\gamma} \mD) + \epsilon.
\]
This implies that $\mA \otimes^{\gamma}
\mD\subseteq_{\gamma_{0}}\mB\otimes^{\gamma} \mD$ and $\mB
\otimes^{\gamma} \mD\subseteq_{\gamma_{0}}\mA\otimes^{\gamma} \mD$.

Let $x\in B_{1}(\mA \otimes^{\gamma} \mP)\subseteq B_{1}(\mA
\otimes^{\gamma} \mD)$. Then, there exists a $y\in \mB
\otimes^{\gamma} \mD$ such that $\|x-y\|_{\gamma}\leq
\gamma_{0}$. Consider the map $\mathrm{id}_{\mC}\otimes^{\gamma} E:
\mC \otimes^{\gamma} \mD \to \mC \otimes^{\gamma} \mP$ by \Cref{P
  bounded maps} with $\|\mathrm{id}_{\mC}\otimes^{\gamma} E\|=
\|E\|\leq 1$. Clearly, it fixes $\mC\otimes^{\gamma} \mP$. Thus,
$y_{0}:= (\mathrm{id}_{\mC}\otimes^{\gamma} E)(y)\in \mB
\otimes^{\gamma} \mP$ and
\begin{eqnarray*}
\|x-y_{0}\|_{\gamma} &=& \|(\mathrm{id}_{\mC}\otimes^{\gamma} E)(x-y)\|_{\gamma}\\
&\leq & \|x-y\|_{\gamma}\leq \gamma_{0}.
\end{eqnarray*}
So, $\mA \otimes^{\gamma} \mP\subseteq_{\gamma_{0}}\mB \otimes^{\gamma}
\mP$. Similarly,  $\mB \otimes^{\gamma} \mP\subseteq_{\gamma_{0}}\mA \otimes^{\gamma}
\mP$. Thus, 
\[
d_{0}(\mA \otimes^{\gamma}\mP, \mB \otimes^{\gamma} \mP)\leq \gamma_{0}< d_{0}(\mA \otimes^{\gamma}
\mD, \mB \otimes^{\gamma} \mD)+ \epsilon.
\]
Hence,
\[
d_{0}(\mA \otimes^{\gamma} \mP, \mB \otimes^{\gamma} \mP) \leq
d_{0}(\mA \otimes^{\gamma} \mD, \mB \otimes^{\gamma} \mD).
\] 
The proof for the Kadison-Kastler distance is analogous and we leave
the details to the reader.
\end{proof}

The following is an easy consequence of the preceding proposition. 
\begin{cor}\label{D-unital}
 Let $\mC $ and $\mD $ be $C^*$-algebras and, $\mA$ and $\mB$ be
 $C^*$-subalgebras of $\mC$. If $\mD$ is unital, then
\[
d_{KK}(\mA, \mB)\leq d_{KK}(\mA \otimes^{\gamma} \mD, \mB \otimes^{\gamma} \mD)
\]
 and \[
d_{0}(\mA, \mB)\leq d_{0}(\mA \otimes^{\gamma} \mD, \mB \otimes^{\gamma} \mD).
\]
\end{cor}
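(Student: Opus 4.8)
The plan is to deduce this from the preceding proposition by taking the subalgebra $\mP$ there to be the scalar multiples of the unit. Concretely, since $\mD$ is unital, $\mP := \C\1_{\mD}$ is a (one-dimensional, unital) $C^*$-subalgebra of $\mD$, and I would first check that there is a conditional expectation $\mD \to \mP$: choosing any state $\phi$ on $\mD$ (one exists because $\mD$ is unital), the map $E(d) = \phi(d)\1_{\mD}$ is a norm-one idempotent onto $\C\1_\mD$ satisfying $E(adb) = a\,E(d)\,b$ for all $a, b \in \C\1_\mD$, hence a conditional expectation. Applying the preceding proposition with this $\mP$ then yields
\[
d_{0}(\mA \otimes^{\gamma} \C\1_\mD, \mB\otimes^{\gamma} \C\1_\mD)\leq d_{0}(\mA \otimes^{\gamma} \mD, \mB\otimes^{\gamma} \mD)
\]
together with the analogous inequality for $d_{KK}$.

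The remaining step is to identify the left-hand sides of these inequalities with $d_0(\mA,\mB)$ and $d_{KK}(\mA,\mB)$ respectively. For this I would use that the unital $*$-homomorphism $\Psi\colon \mC \to \mC\otimes^{\gamma}\mD$, $\Psi(c)=c\otimes\1_\mD$, is isometric --- since $\|c\otimes\1_\mD\|_{\gamma}=\|c\|\,\|\1_\mD\|=\|c\|$, the projective norm of an elementary tensor being the product of the norms --- and that, under the identification of \Cref{injectivity}, $\Psi$ carries $\mA$ onto the copy of $\mA\otimes^{\gamma}\C\1_\mD$ sitting inside $\mC\otimes^{\gamma}\mD$, and $\mB$ onto $\mB\otimes^{\gamma}\C\1_\mD$. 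Because the Christensen and Kadison--Kastler distances between two subspaces are defined purely in terms of norms of elements of their span (via $d(y,B_1(\mcal{Z}))$ and the relation $\mcal{Y}\subseteq_\gamma\mcal{Z}$), they are unchanged by an isometric embedding of the ambient space; hence $d_0(\mA\otimes^{\gamma}\C\1_\mD,\mB\otimes^{\gamma}\C\1_\mD)=d_0(\mA,\mB)$ and likewise for $d_{KK}$. Combining with the displayed inequalities gives the corollary.

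Everything here is routine; the only point that deserves a line of care is the observation that $d_0$ and $d_{KK}$ are intrinsic to the pair of subspaces, so that tensoring with the trivial one-dimensional algebra $\C\1_\mD$ genuinely changes nothing --- but this is immediate from the definitions recalled in \Cref{prelims}. There is no real obstacle here; the content of the corollary lies entirely in the preceding proposition, and this argument is the expected quick reduction.
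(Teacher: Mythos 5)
Your argument is correct and is exactly the intended deduction: the paper gives no proof beyond calling the corollary ``an easy consequence of the preceding proposition,'' and the expected route is precisely yours --- take $\mP=\C\1_{\mD}$, obtain a conditional expectation from a state, and identify $\mA\otimes^{\gamma}\C\1_{\mD}$ isometrically with $\mA$ so that the distances coincide. No issues.
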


\begin{theorem}\label{Projective-Ch}
Let $\mC$ and $\mD$ be $C^*$-algebras and, $\mA$ and $\mB$ be
$C^*$-subalgebras of $\mC$. If $\beta$ is a positive scalar such that
$\mA \subseteq_{\beta}\mB$, then $\mA \otimes^{\gamma} \mD
\subseteq_{\beta}\mB \otimes^{\gamma} \mD$.  In particular,
$$d_0(\mA \otimes^{\gamma} \mD, \mB \otimes^{\gamma} \mD) \leq
d_0(\mA, \mB).$$

Moreover, if $\mD$ is unital, then
$$d_{0}(\mA \otimes^{\gamma} \mD, \mB \otimes^{\gamma} \mD) = d_{0}(\mA, \mB).$$ 
\end{theorem}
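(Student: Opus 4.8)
The plan is to carry out everything inside the Banach $*$-algebra $\mC\otimes^{\gamma}\mD$. First I would use \Cref{injectivity} to regard $\mA\otimes^{\gamma}\mD$ and $\mB\otimes^{\gamma}\mD$ as isometrically embedded closed $*$-subalgebras of $\mC\otimes^{\gamma}\mD$, so that it suffices to establish the near-inclusion $\mA\otimes^{\gamma}\mD\subseteq_{\beta}\mB\otimes^{\gamma}\mD$ inside $\mC\otimes^{\gamma}\mD$. The one genuinely new difficulty, compared with the commutative-factor results, is that $\mA\subseteq_{\beta}\mB$ is in general not witnessed by any bounded linear map $\mA\to\mB$; so, instead of tensoring a single map with $\mathrm{id}_{\mD}$, the argument has to be run one elementary tensor at a time.

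Concretely, given $x\in B_{1}(\mA\otimes^{\gamma}\mD)$ and $\epsilon>0$, \Cref{unit-ball} produces a finite convex combination $w=\sum_{i=1}^{m}\lambda_{i}\,a_{i}\otimes d_{i}$ with $a_{i}\in B_{1}(\mA)$, $d_{i}\in B_{1}(\mD)$, $\lambda_{i}\ge 0$, $\sum_{i}\lambda_{i}=1$, and $\|x-w\|_{\gamma}<\epsilon$. Applying the hypothesis $\mA\subseteq_{\beta}\mB$ to each $a_{i}$, I would choose $b_{i}\in\mB$ with $\|a_{i}-b_{i}\|\le\beta$ and put $y:=\sum_{i=1}^{m}\lambda_{i}\,b_{i}\otimes d_{i}$, which lies in $\mB\otimes^{\gamma}\mD$ (a finite sum of elementary tensors from $\mB\otimes\mD$). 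Then, by subadditivity of $\|\cdot\|_{\gamma}$ and $\|(a_{i}-b_{i})\otimes d_{i}\|_{\gamma}=\|a_{i}-b_{i}\|\,\|d_{i}\|$,
\[
\|x-y\|_{\gamma}\le\|x-w\|_{\gamma}+\sum_{i=1}^{m}\lambda_{i}\,\|a_{i}-b_{i}\|\,\|d_{i}\|<\epsilon+\beta\sum_{i=1}^{m}\lambda_{i}=\beta+\epsilon.
\]
Running the same computation with $\mA$ and $\mB$ interchanged shows: whenever $\mA\subseteq_{\beta}\mB$ and $\mB\subseteq_{\beta}\mA$, one has $\mA\otimes^{\gamma}\mD\subseteq_{\beta'}\mB\otimes^{\gamma}\mD$ and $\mB\otimes^{\gamma}\mD\subseteq_{\beta'}\mA\otimes^{\gamma}\mD$ for every $\beta'>\beta$, hence $d_{0}(\mA\otimes^{\gamma}\mD,\mB\otimes^{\gamma}\mD)\le\beta'$; taking infima over such $\beta$ and $\beta'$ gives $d_{0}(\mA\otimes^{\gamma}\mD,\mB\otimes^{\gamma}\mD)\le d_{0}(\mA,\mB)$. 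For unital $\mD$ the reverse inequality is precisely \Cref{D-unital}, so equality holds.

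The step I expect to cost the most is squeezing out the constant $\beta$ itself instead of $\beta+\epsilon$: the argument above only controls $x$ modulo the error in \Cref{unit-ball}, since finite convex combinations of $B_{1}(\mA)\otimes B_{1}(\mD)$ form a dense (not full) subset of $B_{1}(\mA\otimes^{\gamma}\mD)$; equivalently, a norm-one element of a projective tensor product need not admit a representation $\sum_{k}x_{k}\otimes d_{k}$ with $\sum_{k}\|x_{k}\|\,\|d_{k}\|\le 1$, only with $\le 1+\epsilon$. For $x$ with $\|x\|_{\gamma}<1$ a representation with $\sum_{k}\|x_{k}\|\,\|d_{k}\|\le 1$ does exist, so the estimate then gives exactly $\|x-y\|_{\gamma}\le\beta$; upgrading this to all of $B_{1}(\mA\otimes^{\gamma}\mD)$ requires a further limiting argument exploiting completeness of $\mB\otimes^{\gamma}\mD$. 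In any event the $\beta'$-version already yields the displayed distance inequalities, which is all that is needed in the remainder of the paper.
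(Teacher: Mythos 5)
Your argument is essentially the paper's own proof: both reduce via \Cref{unit-ball} to convex combinations of elementary tensors $a_i\otimes d_i$, replace each $a_i$ by a $\beta$-close $b_i\in\mB$, estimate with the cross-norm property $\|(a_i-b_i)\otimes d_i\|_{\gamma}=\|a_i-b_i\|\,\|d_i\|$, and quote \Cref{D-unital} for the unital case. You are in fact more careful than the paper about the $\epsilon$-loss (the paper passes to $x\in B_1(\mA\otimes\mD)$ by density and asserts an exact convex decomposition, whereas \Cref{unit-ball} only gives the \emph{closed} convex hull), but as you observe this does not affect the displayed distance inequalities.
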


\begin{proof}
As $B_{1}(\mA \otimes \mD)$ is dense in $B_{1}(\mA \otimes^{\gamma}
\mD)$, it is enough to show that $\mA \otimes \mD \subseteq_{\beta}\mB
\otimes \mD$.

Let $x\in B_{1}(\mA \otimes \mD)$. Then, by \Cref{unit-ball} $x=
\sum_{i=1}^{n}\alpha_{i} (a_{i}\otimes d_{i})$, where $a_{i}\in
B_{1}(\mA)$, $d_{i}\in B_{1}(\mD)$, $0\leq \alpha_{i}\leq 1$ and
$\sum_{i=1}^{n}\alpha_{i}= 1$. As $\mA \subseteq_{\beta} \mB$, for
each $a_{i}$, there exists a $b_{i}$ in $\mB$ such that
\(
\|a_{i}-b_{i}\| \leq  \beta.
\) 
Thus, $y := \sum_{i=1}^{n}\alpha_{i} (b_{i}\otimes d_{i})\in \mB\otimes \mD$ and 
\[
\|x-y\|_{\gamma} = 
\|\sum_{i=1}^{n}\alpha_{i} (a_{i}-b_{i})\otimes d_{i}\|_{\gamma} \leq  \sum_{i=1}^{n}\alpha_{i} \|a_{i}-b_{i}\|\|d_{i}\|  \leq
  \beta.
\]
Hence, $\mA \otimes \mD \subseteq_{\beta}\mB
\otimes \mD$.

When $\mcal{D}$ is unital, then the final equality follows from
\Cref{D-unital}.
\end{proof}
On similar lines, we also obtain the following:
\begin{theorem}\label{Projective-KK}
Let $\mC$ and $\mD$ be $C^*$-algebras and, $\mA$ and $\mB$ be
 $C^*$-subalgebras of $\mC$. Then,
$$ d_{KK}(\mA \otimes^{\gamma} \mD, \mB \otimes^{\gamma} \mD) \leq
d_{KK}(\mA, \mB).$$ Moreover, if $\mD$ is unital, then
$$d_{KK}(\mA \otimes^{\gamma} \mD, \mB \otimes^{\gamma} \mD) = d_{KK}(\mA, \mB).$$  
\end{theorem}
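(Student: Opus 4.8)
The plan is to run the proof of \Cref{Projective-Ch} one notch stronger: instead of only producing, for each ball element of $\mA\otimes^{\gamma}\mD$, a nearby element of $\mB\otimes^{\gamma}\mD$, we must produce a nearby element of the \emph{unit ball} $B_{1}(\mB\otimes^{\gamma}\mD)$, and symmetrically. The one tool that makes this possible is \Cref{unit-ball}: it lets us both \emph{decompose} a ball element of $\mA\otimes^{\gamma}\mD$ as a finite convex combination of elementary tensors drawn from $B_{1}(\mA)\otimes B_{1}(\mD)$, and then \emph{reassemble} the perturbed tensor as a convex combination over $B_{1}(\mB)\otimes B_{1}(\mD)$, which automatically lands back in $B_{1}(\mB\otimes^{\gamma}\mD)$.

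Concretely, I would fix $\gamma_{0}>d_{KK}(\mA,\mB)$ and an $\epsilon>0$, and argue that $d_{KK}(\mA\otimes^{\gamma}\mD,\mB\otimes^{\gamma}\mD)\leq\gamma_{0}$. By \Cref{unit-ball} the (non-closed) convex hull of $B_{1}(\mA)\otimes B_{1}(\mD)$ is norm-dense in $B_{1}(\mA\otimes^{\gamma}\mD)$, so it suffices to bound $d\bigl(x,B_{1}(\mB\otimes^{\gamma}\mD)\bigr)$ for $x=\sum_{i=1}^{n}\alpha_{i}(a_{i}\otimes d_{i})$ with $a_{i}\in B_{1}(\mA)$, $d_{i}\in B_{1}(\mD)$, $\alpha_{i}\geq 0$, $\sum_{i}\alpha_{i}=1$. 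Since $\gamma_{0}>d_{KK}(\mA,\mB)\geq\sup_{a\in B_{1}(\mA)}d(a,B_{1}(\mB))$, for each $i$ we may pick $b_{i}\in B_{1}(\mB)$ with $\|a_{i}-b_{i}\|\leq\gamma_{0}$. Then $y:=\sum_{i=1}^{n}\alpha_{i}(b_{i}\otimes d_{i})$ lies in $\mathrm{conv}\bigl(B_{1}(\mB)\otimes B_{1}(\mD)\bigr)\subseteq B_{1}(\mB\otimes^{\gamma}\mD)$ by \Cref{unit-ball}, and
\[
\|x-y\|_{\gamma}=\Bigl\|\sum_{i=1}^{n}\alpha_{i}(a_{i}-b_{i})\otimes d_{i}\Bigr\|_{\gamma}\leq\sum_{i=1}^{n}\alpha_{i}\,\|a_{i}-b_{i}\|\,\|d_{i}\|\leq\gamma_{0}.
\]
Passing to the dense convex hull and then letting $\epsilon\downarrow 0$ gives $\sup_{x\in B_{1}(\mA\otimes^{\gamma}\mD)}d\bigl(x,B_{1}(\mB\otimes^{\gamma}\mD)\bigr)\leq\gamma_{0}$; the identical computation with $\mA$ and $\mB$ interchanged handles the other supremum, so $d_{KK}(\mA\otimes^{\gamma}\mD,\mB\otimes^{\gamma}\mD)\leq\gamma_{0}$, and letting $\gamma_{0}\downarrow d_{KK}(\mA,\mB)$ yields the asserted inequality.

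For the moreover part, when $\mD$ is unital the reverse inequality $d_{KK}(\mA,\mB)\leq d_{KK}(\mA\otimes^{\gamma}\mD,\mB\otimes^{\gamma}\mD)$ is exactly \Cref{D-unital}, so equality is immediate. I do not expect a real obstacle here; the only point requiring a little care — and the place where this differs from the Christensen-distance argument in \Cref{Projective-Ch} — is to keep the convex coefficients $\alpha_{i}$ and the norm constraints $d_{i}\in B_{1}(\mD)$ intact when passing from $x$ to $y$, since that is what guarantees $y\in B_{1}(\mB\otimes^{\gamma}\mD)$ rather than merely $y\in\mB\otimes^{\gamma}\mD$. Everything else is the same density-plus-triangle-inequality bookkeeping.
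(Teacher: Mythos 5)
Your proposal is correct and follows exactly the route the paper intends: the paper omits the proof of \Cref{Projective-KK}, stating only that it goes ``on similar lines'' as \Cref{Projective-Ch}, and your argument is precisely that adaptation. You have also correctly isolated the one genuinely new point --- that \Cref{unit-ball} is needed a second time to guarantee the perturbed convex combination $y=\sum_{i}\alpha_{i}(b_{i}\otimes d_{i})$ lands in $B_{1}(\mB\otimes^{\gamma}\mD)$ rather than merely in $\mB\otimes^{\gamma}\mD$, which is what the Kadison--Kastler distance (unlike the Christensen distance) requires.
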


\section{Some concrete calculations}
In \cite{GK}, some concrete calculations were made regarding
the Kadison-Kastler and Christensen distances between crossed-product
subalgebras. Interestingly, we observed that when a discrete group $G$
acts on a $C^*$-algebra $\mA$ via a group homomorphism $\alpha: G
\rightarrow \Aut(\mA) $, then for any pair of distinct subgroups $H$
and $K$ of $G$, 
\begin{enumerate}
\item $d_{KK}(A\rtimes^r_{\alpha}H, A\rtimes^r_{\alpha}K)=1 =
  d_{0}(A\rtimes^r_{\alpha}H, A\rtimes^r_{\alpha}K)$ in $\mA \rtimes^r_{\alpha}
  G$; and,
\item $d_{KK}(A\rtimes^u_{\alpha}H, A\rtimes^u_{\alpha}K)=1 =
  d_{0}(A\rtimes^u_{\alpha}H, A\rtimes^u_{\alpha}K) $ in
  $A\rtimes^u_{\alpha}G$.
\end{enumerate} 
In this section, we make analogues calculations for reduced twisted
crossed-product subalgebras in $C^*$-algebras and for crossed-product
subalgebras in von Neumann algebras.

\subsection{Distance between subalgebras of reduced twisted
  crossed product $C^*$-algebras}\label{twisted} 

\subsubsection{Reduced twisted crossed product}
Let $G$ be a discrete group and $\mA$ be a unital $C^*$-algebra. A
discrete twisted $C^*$-dynamical system is a quadruple
$(\mA,G,\alpha,\sigma)$ of a unital $C^*$-algebra $\mA$, a discrete
group $G$ and a pair of maps $\alpha : G \rightarrow
\Aut(\mA)$ and $\sigma: G\times G \rightarrow \mathcal{U}(\mA)$
satisfying
\begin{eqnarray*}
\alpha_{s}\circ \alpha_{t} & = & Ad_{\sigma(s,t)}\circ
\alpha_{st};\\ \sigma(r,s)\sigma(rs,t) & = &
\alpha_{r}(\sigma(s,t))\sigma(r,st);\, \text{and},\\ \sigma(s,e) & = &
\sigma(e,s)=1
\end{eqnarray*}
for all $r,s,t\in G$.

For a given discrete twisted $C^*$-dynamical system
$(\mA,G,\alpha,\sigma)$, where $\mA$ is faithfully and
non-degenerately represented on a Hilbert space $\mcal{H}$, the
reduced twisted crossed product $\mA \rtimes^{r}_{(\alpha,\sigma)} G$
(sometimes also denoted by $C^{*}_{r}(\mA,G,\alpha, \sigma)$) can be
defined as the $C^*$-algebra generated by $\pi_{\alpha}(\mA)$ and
$\lambda_{\sigma}^G(G)$ inside $\mcal{B}(l^2(G,\mcal{H}))$, where
$\pi_{\alpha}$ is the faithful representation of $\mA$ on
$l^2(G,\mcal{H})$ defined by
\[
(\pi_{\alpha}(a)\zeta)(h)=\alpha_{h^{-1}}(a)\zeta(h),
\]
and, for each $g\in G$, $\lambda^{G}_{\sigma}(g)$ is the unitary
operator on $l^2(G,\mcal{H})$ defined by
\[
(\lambda^{G}_{\sigma}(g)\zeta)(h)= \sigma(h^{-1},g)\zeta(g^{-1}h),
\]
for $ a\in \mA, \zeta\in l^2(G,\mcal{H}), h\in G$. The pair
$(\pi_{\alpha},\lambda^{G}_{\sigma})$ satisfies the
covariant relations
\begin{eqnarray*}
\pi_{\alpha}(\alpha_{g}(a)) & = &
\lambda^{G}_{\sigma}(g)\pi_{\alpha}(a)\lambda^{G}_{\sigma}(g)^{*}
\text{\, and }\\ \lambda^{G}_{\sigma}(g)\lambda^{G}_{\sigma}(h) & = &
\pi_{\alpha}(\sigma(g,h))\lambda^{G}_{\sigma}(gh)
\end{eqnarray*}
for all $a\in \mA$ and $g,h\in G$. It is well known that, upto
$*$-isomorphism, $\mA \rtimes^{r}_{(\alpha,\sigma)} G$ is independent
of the faithful representation of $\mA$. For convenience, we shall
identify $\mA$ with $\pi_{\alpha}(\mA)$. Consider the subset
\[
C_{c}(G,\mA,\alpha,\sigma):=
\left\{\sum_{g \in F}a_{g}\lambda^{G}_{\sigma}(g)\,\,: \ F \in \mcal{F}(G),\,\,a_{g}\in
\mA, \,\, g\in G\right\}
\]
of $\mA \rtimes^{r}_{(\alpha,\sigma)}G$, where $\mcal{F}(G)$ denotes
the collection of finite subsets of $G$. Then,
$C_{c}(G,\mA,\alpha,\sigma)$ is a unital dense $*$-subalgebra of $\mA
\rtimes^{r}_{(\alpha,\sigma)}G$. Additionally, we have
\begin{eqnarray*}
(\lambda^{G}_{\sigma}(t))^* &=&
  \lambda^{G}_{\sigma}(t^{-1})\sigma(t,t^{-1})^*=
  \sigma(t,t^{-1})^*\lambda^{G}_{\sigma}(t^{-1}),\,\,\text{and}\\ x^{*}_{g}
  &=& \alpha_{g}(x_{g^{-1}})^{*}\sigma(g^{-1},g)^{*}\\ &=&
  \sigma(g,g^{-1})^{*} \alpha_{g}(x_{g^{-1}})^{*}
\end{eqnarray*}
 for all $t,\,g\in G.$

The following observations in the reduced twisted crossed product are
very useful - for more, see \cite{BG}, \cite{B}.
   
\begin{remark}\label{r-eta-relation}
Let $\mA, G$ and $\alpha$ be as above and $H$ be a subgroup of $G$. 
\begin{enumerate}

\item There exists a faithful conditional expectation $E:\mA
  \rtimes^{r}_{(\alpha,\sigma)}G \rightarrow \mA $ such that
  $E(\lambda^{G}_{\sigma}(t))=0$ for all $t\in G$, $t \neq
  e$. (\cite[Theorem 2.2]{B})

\item Consider $\mA \rtimes^{r}_{(\alpha,\sigma)}G$ as a (right) pre-Hilbert
$C^*$-module (over $\mA$) with respect to the $\mA$-valued inner product
  given by
\[
\langle x,y\rangle_{\mA}=E(x^*y),\,\, \text{for}\,\, x,y\in \mA \rtimes^{r}_{(\alpha,\sigma)}G.
\]
Further, consider the identity map $\eta_{E}: \mA
\rtimes^{r}_{(\alpha,\sigma)}G \rightarrow \mA
\rtimes^{r}_{(\alpha,\sigma)}G$, where the co-domain is treated as the
pre-Hilbert $\mA$-module. Since $E$ is a contraction,
\[
\|\eta_{E}(x)\| =\|E(x^*x)\|^{\frac{1}{2}}\leq \|x\|
\]
for all $x\in \mA \rtimes^{r}_{(\alpha,\sigma)}G$.

\item The injective map $C_{c}(H,\mA,\alpha,\sigma) \ni z \mapsto z\in
  C_{c}(G,\mA,\alpha,\sigma)$ extends to an injective $*$-homomorphism
  from $\mA \rtimes^{r}_{(\alpha,\sigma)}H$ into $\mA
  \rtimes^{r}_{(\alpha,\sigma)}G$. Thus, $\mA
  \rtimes^{r}_{(\alpha,\sigma)}H$ can be considered as a
  $C^*$-subalgebra of $\mA \rtimes^{r}_{(\alpha,\sigma)}G$.\hfill (See
  \cite[$\S 4.26$]{Z-M}.)
\end{enumerate}
\end{remark}
The following useful observation follows from \cite[Corollary
  2.3]{B}. However, we provide a direct proof for the sake of
completeness.
\begin{lemma}\label{xg-inequality}
Let $(\mA,G,\alpha, \sigma)$ be a (discrete) twisted $C^*$-dynamical
system. If $x= \sum_{g\in F}x_{g}\lambda_{\sigma}(g)\in
C_{c}(G,\mA,\sigma)$ for some $F\in \mathcal{F}(G)$, then
\[
\|x_{g}\|^2\leq \|E(x^*x)\|=\|\eta_{E}(x)\|^{2}
\]
for all $g\in F$.
\end{lemma}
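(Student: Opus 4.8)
The plan is to compute $E(x^{*}x)$ explicitly, recognise it as a finite sum of positive elements of $\mA$ indexed by $F$, and then extract the bound for a single coefficient $x_{g}$ by comparing $\alpha_{g}^{-1}(x_{g}^{*}x_{g})$ with that sum.

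First I would expand $x^{*}x = \sum_{g,h\in F}\lambda_{\sigma}(g)^{*}x_{g}^{*}x_{h}\lambda_{\sigma}(h)$ and treat the diagonal and off-diagonal parts separately. For $g=h$, the covariance relation $\lambda_{\sigma}(g)^{*}\,a\,\lambda_{\sigma}(g)=\alpha_{g}^{-1}(a)$ (valid after the identification $\mA=\pi_{\alpha}(\mA)$) gives $\lambda_{\sigma}(g)^{*}x_{g}^{*}x_{g}\lambda_{\sigma}(g)=\alpha_{g}^{-1}(x_{g}^{*}x_{g})\in\mA$, so $E$ fixes each such term. For $g\neq h$, I would move $x_{g}^{*}x_{h}$ to the left (picking up $\alpha_{g}^{-1}$) and then use $\lambda_{\sigma}(g)^{*}=\sigma(g,g^{-1})^{*}\lambda_{\sigma}(g^{-1})$ together with $\lambda_{\sigma}(g^{-1})\lambda_{\sigma}(h)=\pi_{\alpha}(\sigma(g^{-1},h))\lambda_{\sigma}(g^{-1}h)$ to rewrite the term as (an element of $\mA$)$\,\lambda_{\sigma}(g^{-1}h)$; since $g^{-1}h\neq e$, the left $\mA$-linearity of $E$ and $E(\lambda_{\sigma}(t))=0$ for $t\neq e$ (Remark~\ref{r-eta-relation}) kill it. Hence $E(x^{*}x)=\sum_{g\in F}\alpha_{g}^{-1}(x_{g}^{*}x_{g})$.

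Each summand $\alpha_{g}^{-1}(x_{g}^{*}x_{g})$ is a positive element of $\mA$, so for a fixed $g\in F$ we have $0\leq\alpha_{g}^{-1}(x_{g}^{*}x_{g})\leq E(x^{*}x)$ in $\mA$, whence $\|\alpha_{g}^{-1}(x_{g}^{*}x_{g})\|\leq\|E(x^{*}x)\|$. Since $\alpha_{g}^{-1}$ is a $*$-automorphism it is isometric, so $\|x_{g}\|^{2}=\|x_{g}^{*}x_{g}\|=\|\alpha_{g}^{-1}(x_{g}^{*}x_{g})\|\leq\|E(x^{*}x)\|$; and $\|E(x^{*}x)\|=\|\eta_{E}(x)\|^{2}$ by the definition of $\eta_{E}$ recalled in Remark~\ref{r-eta-relation}. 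This proves the lemma.

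The only mildly delicate point is the cocycle bookkeeping in the off-diagonal terms, but since one only needs those terms to lie in the linear span of $\{\lambda_{\sigma}(t):t\neq e\}$ rather than in any precise form, the presence of $\sigma$ creates no real obstacle. As an alternative that sidesteps the computation of $E(x^{*}x)$ entirely, one could regard $\mA\rtimes^{r}_{(\alpha,\sigma)}G$ as a Hilbert $\mA$-module as in Remark~\ref{r-eta-relation}, observe that $\langle\lambda_{\sigma}(g),x\rangle_{\mA}=\alpha_{g}^{-1}(x_{g})$ and $\langle\lambda_{\sigma}(g),\lambda_{\sigma}(g)\rangle_{\mA}=1$, and apply the module Cauchy--Schwarz inequality $\langle\lambda_{\sigma}(g),x\rangle_{\mA}^{*}\langle\lambda_{\sigma}(g),x\rangle_{\mA}\leq\|\langle\lambda_{\sigma}(g),\lambda_{\sigma}(g)\rangle_{\mA}\|\,\langle x,x\rangle_{\mA}$ to reach the same conclusion.
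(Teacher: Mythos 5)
Your proposal is correct and follows essentially the same route as the paper: both compute $E(x^{*}x)$ explicitly as the sum over $F$ of the positive diagonal contributions (the off-diagonal terms being annihilated by $E$ since $E(\lambda_{\sigma}(t))=0$ for $t\neq e$), and then extract $\|x_{g}\|^{2}\leq\|E(x^{*}x)\|$ from the ordering of positive elements. The only cosmetic difference is that you write the summands as $\alpha_{g}^{-1}(x_{g}^{*}x_{g})$ directly from the covariance relation, whereas the paper uses the Fourier-coefficient convolution formula and cancels the cocycle unitaries to land on $\alpha_{g^{-1}}(x_{g}^{*}x_{g})$ --- these agree up to an inner perturbation and have the same norm, so the conclusion is identical; your Hilbert-module Cauchy--Schwarz aside is a valid shortcut but not the paper's argument.
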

\begin{proof}
Notice that, for any $g\in G$,
\[
(x^*x)_g = \sum_{h\in G}x^{*}_{h}\alpha_{h}(x_{h^{-1}g})\sigma(h, h^{-1}g),
\]
with the convention that $x_t=0$ for $t \notin F$. Thus,
\begin{eqnarray*}
E(x^*x)&=& E(\sum_{g\in G}x^*x(g)\lambda^{G}_{\sigma}(g))=(x^*x)_e\\
&=& \sum_{h\in G}x^{*}_{h}\alpha_{h}(x_{h^{-1}})\sigma(h, h^{-1})\\
&=& \sum_{h\in G}\sigma(h, h^{-1})^{*}\alpha_{h}(x^{*}_{h^{-1}}x_{h^{-1}})\sigma(h,h^{-1})\\
&=&\sum_{h\in G}\sigma(h, h^{-1})^{*}(\alpha_{h}\circ\alpha_{h^{-1}})(\alpha_{h}(x^{*}_{h^{-1}}x_{h^{-1}}))\sigma(h,h^{-1})\\
&=& \sum_{h\in G} \alpha_{h}(x^{*}_{h^{-1}}x_{h^{-1}})\\
&=& \sum_{h\in G} \alpha_{h^{-1}}(x^{*}_{h}x_{h}).
\end{eqnarray*} 
In particular, $E(x^*x)\geq \alpha_{g^{-1}}(x_{g}^*x_{g})$ for every $g\in F$. Thus,
\[
\|\eta_{E}(x)\|^{2}= \|E(x^*x)\|\geq \|\alpha_{g^{-1}}(x_{g}^*x_{g})\|=\|x_{g}\|^{2}
\]
for all $g\in F$.
\end{proof}

\begin{proposition}\label{sg-distance}
Let $(\mA,\,G,\,\alpha,\,\sigma)$ be a (discrete) twisted
$C^*$-dynamical system and let $H$ and $K$ be two distinct
subgroups of $G$. Then,
\[
d_{KK}(C_{c}(H,\mA,\alpha, \sigma), C_{c}(K,\mA,\alpha, \sigma))= 1=
d_{0}(C_{c}(H,\mA,\alpha, \sigma), C_{c}(K,\mA,\alpha,\sigma))
\]
in $\mA \rtimes^{r}_{(\alpha,\sigma)}G$.
\end{proposition}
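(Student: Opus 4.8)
The plan is to exploit the elementary bound $d_0 \le d_{KK} \le 1$, valid for arbitrary subspaces (see \Cref{d0-basics}), so that it suffices to prove $d_0\bigl(C_{c}(H,\mathcal{A},\alpha,\sigma),\, C_{c}(K,\mathcal{A},\alpha,\sigma)\bigr) \ge 1$. Since $H \ne K$ and the asserted equality is symmetric in $H$ and $K$, I may, after possibly interchanging the two subgroups, fix an element $g_{0} \in H \setminus K$. The core of the argument is then to exhibit a single contraction in $C_{c}(H,\mathcal{A},\alpha,\sigma)$ lying at distance $\ge 1$ from \emph{every} element of $C_{c}(K,\mathcal{A},\alpha,\sigma)$, and the natural candidate is the ``off-diagonal'' unitary $\lambda^{G}_{\sigma}(g_{0})$. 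Via the embedding $\mathcal{A}\rtimes^{r}_{(\alpha,\sigma)}H \hookrightarrow \mathcal{A}\rtimes^{r}_{(\alpha,\sigma)}G$ of \Cref{r-eta-relation}(3), this element does lie in $C_{c}(H,\mathcal{A},\alpha,\sigma)$, and being a unitary it belongs to $B_{1}\bigl(C_{c}(H,\mathcal{A},\alpha,\sigma)\bigr)$.

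Next I would take an arbitrary $z = \sum_{k\in F} z_{k}\lambda^{G}_{\sigma}(k) \in C_{c}(K,\mathcal{A},\alpha,\sigma)$, with $F$ a finite subset of $K$, and examine $x := \lambda^{G}_{\sigma}(g_{0}) - z \in C_{c}(G,\mathcal{A},\alpha,\sigma)$. Grouping the terms by group element gives $x = \sum_{g} x_{g}\lambda^{G}_{\sigma}(g)$ with $x_{g_{0}} = 1_{\mathcal{A}}$, precisely because $g_{0} \notin K \supseteq F$ so no term of $z$ contributes at $g_{0}$; in particular $g_{0}$ lies in the support of $x$. Applying \Cref{xg-inequality} (together with $\|\eta_{E}(x)\| \le \|x\|$ from \Cref{r-eta-relation}(2)) yields
\[
\|\lambda^{G}_{\sigma}(g_{0}) - z\| \;\ge\; \|x_{g_{0}}\| \;=\; \|1_{\mathcal{A}}\| \;=\; 1 .
\]
Since $z \in C_{c}(K,\mathcal{A},\alpha,\sigma)$ was arbitrary, $d\bigl(\lambda^{G}_{\sigma}(g_{0}),\, C_{c}(K,\mathcal{A},\alpha,\sigma)\bigr) \ge 1$, so $C_{c}(H,\mathcal{A},\alpha,\sigma) \not\subseteq_{\gamma} C_{c}(K,\mathcal{A},\alpha,\sigma)$ for any $\gamma < 1$, whence $d_{0} \ge 1$. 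Combined with $d_{0} \le d_{KK} \le 1$, this gives $d_{KK} = d_{0} = 1$, as claimed.

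I do not expect a serious obstacle; the one place that deserves care is the bookkeeping of the ``Fourier coefficients'' — namely that the coefficients $x_{g}$ entering \Cref{xg-inequality} are exactly those read off from any expression $x = \sum_{g} x_{g}\lambda^{G}_{\sigma}(g)$, so that forming $\lambda^{G}_{\sigma}(g_{0}) - z$ legitimately produces an element whose $g_{0}$-coefficient is the unit $1_{\mathcal{A}}$. If one wishes to avoid any ambiguity one may instead extract this coefficient canonically as $E\bigl(x\,\lambda^{G}_{\sigma}(g_{0})^{*}\bigr)$, using that $\lambda^{G}_{\sigma}(g_{0})$ is unitary and $E\bigl(\lambda^{G}_{\sigma}(t)\bigr)=0$ for $t\neq e$ (\Cref{r-eta-relation}(1)); either way the estimate above goes through unchanged. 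The remaining ingredients — unitality of $\mathcal{A}$ (so that $1_{\mathcal{A}}$ is available with norm $1$), the fact that $\lambda^{G}_{\sigma}(g_{0})$ is a contraction, and $d_{0}\le d_{KK}$ — are already recorded in the excerpt.
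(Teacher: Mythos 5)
Your argument is correct and is essentially identical to the paper's own proof: both reduce to showing $d_{0}\geq 1$ via the bound $d_{0}\leq d_{KK}\leq 1$, pick $h\in H\setminus K$, and apply \Cref{xg-inequality} together with the contractivity of $\eta_{E}$ to the element $\lambda^{G}_{\sigma}(h)-z$, whose $h$-coefficient is $1_{\mA}$. No issues.
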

\begin{proof}
Note that, $d_{KK}(C_{c}(H,\mA,\sigma), C_{c}(K,\mA,\sigma))\leq 1$,
by \Cref{KK-facts}(1), and
\begin{eqnarray*}
d_{0}(C_{c}(H,\mA,\alpha,\sigma), C_{c}(K,\mA,\alpha, \sigma))\leq
d_{KK}(C_{c}(H,\mA,\alpha, \sigma), C_{c}(K,\mA,\alpha, \sigma)),
\end{eqnarray*}
 by  \Cref{d0-basics}. So, it just remains  to show that
\[ d_{0}(C_c(H, \mA, \sigma),C_c(K, \mA, \sigma)) \geq 1.
\]
Since $H$ and $K$ are distinct, either $H \neq H \cap K$
or $K \neq H \cap K$.  Without loss of generality, we can assume that
$H\neq H\cap K$. Then, in view of
\Cref{r-eta-relation}(2) and \Cref{xg-inequality}, we observe that
\[
\|\lambda_{\sigma}(h) - x\|\geq \|\eta_{E}(\lambda_{\sigma}(h) -
x)\|_{\mA} \geq 1 
\]
for all $h \in H \setminus H \cap K$, $x \in C_c(K, \mA,\alpha,
\sigma)$.
 This shows that $C_c(H, \mA,\alpha,
\sigma) \nsubseteq_{\beta} C_c(K, \mA,\alpha, \sigma)$ whenever $0< \beta < 1$.
In other words, if  $C_c(H, \mA,\alpha,
\sigma) \subseteq_{\beta} C_c(K, \mA,\alpha, \sigma)$ for some $\beta
> 0$, then $\beta$ must be $ \geq 1$.

So, by the definition of $d_0$, we must
have
  \[
  d_{0}(C_c(H, \mA,\alpha, \sigma), C_c(K, \mA,\alpha,\sigma)) \geq 1.
  \]
\end{proof}
In view of \Cref{KK-facts}, \Cref{d0-basics} and the preceding
proposition, we readily deduce the following:
\begin{cor}
Let $G, H, K, \mA,\alpha$ and $\sigma$ be as in \Cref{sg-distance}. Then,
\[
d_{KK}(\mA \rtimes^{r}_{(\alpha,\sigma)}H, \mA \rtimes^{r}_{(\alpha,\sigma)}K)=1= d_{0}(\mA \rtimes^{r}_{(\alpha,\sigma)}H, \mA \rtimes^{r}_{(\alpha,\sigma)}K).
\]
\end{cor}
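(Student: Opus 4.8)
The plan is to obtain this as an immediate corollary of \Cref{sg-distance}, using only the fact that both $d_{KK}$ and $d_0$ are insensitive to replacing a subspace by its norm-closure. First I would recall, from \Cref{r-eta-relation}(3), that the inclusion $C_{c}(H,\mA,\alpha,\sigma)\hookrightarrow C_{c}(G,\mA,\alpha,\sigma)$ extends to an injective (hence isometric) $*$-homomorphism of $\mA\rtimes^{r}_{(\alpha,\sigma)}H$ into $\mA\rtimes^{r}_{(\alpha,\sigma)}G$; consequently, when we view $C_{c}(H,\mA,\alpha,\sigma)$ as a subalgebra of $\mA\rtimes^{r}_{(\alpha,\sigma)}G$, its norm-closure is precisely the copy of $\mA\rtimes^{r}_{(\alpha,\sigma)}H$ sitting inside $\mA\rtimes^{r}_{(\alpha,\sigma)}G$, since $C_{c}(H,\mA,\alpha,\sigma)$ is dense in $\mA\rtimes^{r}_{(\alpha,\sigma)}H$. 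The same applies verbatim to $K$ in place of $H$.

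Granting this, the proof is purely formal: by \Cref{KK-facts}(2),
\[
d_{KK}\big(\mA\rtimes^{r}_{(\alpha,\sigma)}H,\ \mA\rtimes^{r}_{(\alpha,\sigma)}K\big)=d_{KK}\big(\overline{C_{c}(H,\mA,\alpha,\sigma)},\ \overline{C_{c}(K,\mA,\alpha,\sigma)}\big)=d_{KK}\big(C_{c}(H,\mA,\alpha,\sigma),\ C_{c}(K,\mA,\alpha,\sigma)\big),
\]
and, analogously, by \Cref{d0-basics}(2),
\[
d_{0}\big(\mA\rtimes^{r}_{(\alpha,\sigma)}H,\ \mA\rtimes^{r}_{(\alpha,\sigma)}K\big)=d_{0}\big(C_{c}(H,\mA,\alpha,\sigma),\ C_{c}(K,\mA,\alpha,\sigma)\big).
\]
By \Cref{sg-distance}, each of the right-hand sides equals $1$, which is exactly the assertion.

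The only point that needs a moment of care — essentially the sole ``obstacle'' — is the bookkeeping in the first step: one must be sure that the copies of $\mA\rtimes^{r}_{(\alpha,\sigma)}H$ and $\mA\rtimes^{r}_{(\alpha,\sigma)}K$ referred to in the corollary are indeed the norm-closures, inside $\mA\rtimes^{r}_{(\alpha,\sigma)}G$, of the respective $C_{c}$-subalgebras appearing in \Cref{sg-distance}. This is guaranteed by the isometric embeddings of \Cref{r-eta-relation}(3), which restrict to the identity on the dense $*$-subalgebras $C_{c}(H,\mA,\alpha,\sigma)$ and $C_{c}(K,\mA,\alpha,\sigma)$. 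Once this identification is in place, no further estimates are required, and the corollary follows.
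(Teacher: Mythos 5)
Your proposal is correct and is exactly the argument the paper intends: reduce to \Cref{sg-distance} via the closure-invariance of $d_{KK}$ and $d_{0}$ (\Cref{KK-facts}(2) and \Cref{d0-basics}(2)), after identifying $\mA \rtimes^{r}_{(\alpha,\sigma)}H$ and $\mA \rtimes^{r}_{(\alpha,\sigma)}K$ with the norm-closures of the corresponding $C_{c}$-subalgebras inside $\mA \rtimes^{r}_{(\alpha,\sigma)}G$ using the isometric embeddings of \Cref{r-eta-relation}(3). Your explicit care about this identification is in fact slightly more detailed than the paper's one-line deduction.
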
 

\subsection{Distance between subalgebras of crossed-product von Neumann algebras}\label{crossedv}
Let $G$ be a discrete group acting on a von Neumann algebra $\mcal{M}
\subseteq B(\mcal{H}) $ by the $*$-automorphisms $\alpha_{g}, g\in
G$. Let $\mcal{\tilde{H}}= l^{2}(G,\mcal{H})$ be the set of all square
summable $\mcal{H}$-valued functions on $G$. Consider the faithful
representations $\pi$ of $\mcal{M}$ and $\lambda$ of $G$ on
$\mcal{\tilde{H}}$ given by
\[
\pi(a)\xi(g)= \alpha_{g^{-1}}(a)\xi(g),\,\,\,\,\,
\lambda(g)\xi(h)= \xi(g^{-1}h),
\]
for all $\xi \in \mcal{\tilde{H}}, g\in G, h\in G, a\in
\mcal{M}$. These representations satisfy the covariance relation
\[
\lambda(g)\pi(a)\lambda(g)^{*}= \pi(\alpha_{g}(a))
\]
for all $g\in G$ and $a\in \mcal{M}$. The von Neumann algebra
generated by $\pi(\mcal{M})$ and $\lambda(G)$ is known as the
crossed-product algebra of $\mcal{M}$ by $G$ with respect to $\alpha$,
and it is denoted by $\mcal{M} \rtimes_{\alpha}G$.

The following observations are very useful in the crossed-product von
Neumann algebras. For  details, see   \cite[Page 255]{M}.
\begin{remark}
Let $\mcal{M}$, $G$ and $\alpha$ be as above.
\begin{enumerate}
\item Consider $\mcal{M}_{0}:= \{\sum_{g\in
  G}\pi(x(g))\lambda(g)\,\,|\,\,x: G\rightarrow \mcal{M}\,\text{is
  finitely supported}\}$. Then, $\mcal{M}_{0}$ is a $\sigma$-weakly
  dense unital $*$-subalgebra of $\mcal{M} \rtimes_{\alpha}G$.
\item
  For each $g\in G $, consider the operator 
\[
P_{g}: \mcal{\tilde{H}}\rightarrow \mcal{H}\,\, \text{given by}\,\,\,
P_{g}(\xi)=\xi(g^{-1}),\,\xi\in \mcal{\tilde{H}}.
\]
These operators satisfy the following properties:
\begin{enumerate}
  \item
    $ P_{g}(\lambda(h))= P_{gh}$;
  \item $P_{g}(\pi(x))=\alpha_{g}(x)P_{g}$;
  \item $P_{g}\pi(x)P_{g}^{*}=\alpha_{g}(x)$; and,
    \item $\sum^{\sigma \text{-}\mathrm{weak}}_{g\in
      G} P_{g}^{*}\alpha_{g}(x)P_{g}= \pi (x),$
      \end{enumerate}
for all $g,h\in G$ and $x\in \mcal{M}$.
\item Define $E: B( \mcal{\tilde{H}})\rightarrow B(\mcal{H})$ by $E(x)= P_{e}xP^{*}_{e}$, $x \in B( \mcal{\tilde{H}})$. Then, $E(\mcal{M} \rtimes_{\alpha}G) = \mcal{M}$ and  
$$
E(\lambda(g)x\lambda(g)^*)= \alpha_{g}(E(x)),
$$
for all $x\in \mcal{M} \rtimes_{\alpha}G, g \in G$. 
\item[(iv)] $\tilde{E}:= \pi \circ E : \mcal{M} \rtimes_{\alpha}G
  \rightarrow \pi(\mcal{M})$ is a faithful normal conditional
  expectation.
\end{enumerate}
\end{remark}
The following elementary lemma was mentioned in \cite{M} and will be useful ahead.
\begin{lemma}\label{relation}
For any $z\in \mcal{M} \rtimes_{\alpha}G$, we have
\[
\tilde{E}(z)= \sum_{g\in G}Q_{g}zQ_{g},
\]
where $Q_{g}= P^{*}_{g}P_{g}$ and the sum converges in the $\sigma$-weak topology. 
\end{lemma}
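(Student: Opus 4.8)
The plan is to deduce the formula directly from the spatial reconstruction identity $\pi(x)=\sum_{g\in G}^{\sigma\text{-weak}}P_{g}^{*}\alpha_{g}(x)P_{g}$ (valid for every $x\in\mcal{M}$), after rewriting $\alpha_{g}(E(z))$ as a compression of $z$ itself. Throughout I use only the facts already recorded above: $\widetilde{E}=\pi\circ E$ with $E(\cdot)=P_{e}(\cdot)P_{e}^{*}$; $E$ carries $\mcal{M}\rtimes_{\alpha}G$ onto $\mcal{M}$; $E(\lambda(g)z\lambda(g)^{*})=\alpha_{g}(E(z))$ for all $z\in\mcal{M}\rtimes_{\alpha}G$, $g\in G$; and $P_{g}\lambda(h)=P_{gh}$.

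\emph{Key step.} I would first establish the purely algebraic identity
\[
\alpha_{g}(E(z))=P_{g}\,z\,P_{g}^{*}\qquad\bigl(z\in\mcal{M}\rtimes_{\alpha}G,\ g\in G\bigr).
\]
For this, note that $\lambda(g)z\lambda(g)^{*}$ again lies in $\mcal{M}\rtimes_{\alpha}G$, so by the covariance property of $E$ we get $\alpha_{g}(E(z))=E(\lambda(g)z\lambda(g)^{*})=P_{e}\lambda(g)\,z\,\lambda(g)^{*}P_{e}^{*}$. Now $P_{e}\lambda(g)=P_{eg}=P_{g}$, and taking adjoints gives $\lambda(g)^{*}P_{e}^{*}=(P_{e}\lambda(g))^{*}=P_{g}^{*}$; substituting, the right-hand side collapses to $P_{g}zP_{g}^{*}$. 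No density or continuity arguments are needed at this stage.

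\emph{Conclusion.} Since $E(z)\in\mcal{M}$, applying the reconstruction identity to $x=E(z)$ and then the key step yields
\[
\widetilde{E}(z)=\pi\bigl(E(z)\bigr)=\sum_{g\in G}^{\sigma\text{-weak}}P_{g}^{*}\,\alpha_{g}\bigl(E(z)\bigr)\,P_{g}=\sum_{g\in G}^{\sigma\text{-weak}}P_{g}^{*}\bigl(P_{g}zP_{g}^{*}\bigr)P_{g}=\sum_{g\in G}^{\sigma\text{-weak}}Q_{g}\,z\,Q_{g},
\]
where $Q_{g}=P_{g}^{*}P_{g}$ and the $\sigma$-weak convergence is inherited verbatim from the reconstruction identity. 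I do not anticipate a real obstacle; the only point needing a moment's thought is the spatial identification in the key step, which in turn reduces to the single relation $P_{e}\lambda(g)=P_{g}$. One could instead check the claimed equality first on the $\sigma$-weakly dense $*$-subalgebra $\mcal{M}_{0}$ of finite sums $\sum_{g}\pi(x_{g})\lambda(g)$ and then pass to the limit by normality of both sides, but that argument additionally requires verifying that $z\mapsto\sum_{g}Q_{g}zQ_{g}$ is $\sigma$-weakly continuous (it is the standard normal conditional expectation associated with the orthogonal family $\{Q_{g}\}$, which satisfies $\sum_{g}Q_{g}=\pi(1)=1$ by the reconstruction identity with $x=1$); the route above avoids this, so it is the one I would take.
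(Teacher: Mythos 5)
Your proposal is correct. The paper itself gives no proof of \Cref{relation} (it merely attributes the statement to Mercer's paper \cite{M}), so your argument supplies the missing details rather than diverging from an existing one. The two ingredients you use are exactly the right ones: the identity $\alpha_{g}(E(z))=E(\lambda(g)z\lambda(g)^{*})=P_{e}\lambda(g)\,z\,\lambda(g)^{*}P_{e}^{*}=P_{g}zP_{g}^{*}$ (which extends property (c) of the $P_g$'s from $\pi(\mcal{M})$ to all of $\mcal{M}\rtimes_{\alpha}G$ via the covariance of $E$ and the relation $P_{e}\lambda(g)=P_{g}$), and the $\sigma$-weak reconstruction identity $\pi(x)=\sum_{g}P_{g}^{*}\alpha_{g}(x)P_{g}$ applied to $x=E(z)\in\mcal{M}$; substituting the former into the latter gives $\tilde{E}(z)=\sum_{g}Q_{g}zQ_{g}$ with the convergence inherited termwise, and no further continuity argument is needed.
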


We now quickly recall the Fourier series expansion of elements of the
crossed-product algebra $\mcal{M}\rtimes_{\alpha}G$ given by
Mercer (\cite{M}). Consider the semi-norms on $\mcal{M}\rtimes_{\alpha}G$ given
by $x\mapsto \omega\circ E(x^*x)^\frac{1}{2}$, for all $\omega \in
\mcal{M}_{*}$. The topology generated by this separating family of
semi-norms is called the Bures topology on $\mcal{M}\rtimes_{\alpha}G$
and it will be denoted by $\tau_{\mathcal{B}}$.

\begin{lemma}\cite{M}
  Let $\mcal{M}, G$ and $\alpha$ be as above. Then, the following hold:
  \begin{enumerate}
\item $x = \sum^{\tau_{\mathcal{B}}}_{g\in G}\pi(x(g))\lambda(g)\,$
  for all $x \in \mcal{M}\rtimes_{\alpha}G$, where
  $x(g):=E(x\lambda(g)^*)$.
  \item For $x,y \in \mcal{M}\rtimes_{\alpha}G$, the multiplication
    and $*$-operations in $\mcal{M}\rtimes_{\alpha}G$ are given by
\begin{eqnarray*}
(xy)(g) & = & \sum^{\sigma\text{-}\mathrm{weak}}_{h\in
    G}x(h)\alpha_{h}(y(h^{-1}g));\,\text{and,}\\ x^{*}(g) & = &
  \alpha_{g}(x(g^{-1})^*),\ \text{for all } g\in G.
\end{eqnarray*} 
\end{enumerate}\end{lemma}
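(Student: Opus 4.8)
The plan is to treat the two displayed assertions in turn. The $*$-formula in item (2) and the ``coefficient-extraction'' identity on the dense subalgebra $\mcal{M}_{0}$ are routine, so I would dispose of them first: for $z=\sum_{h\in F}\pi(a_{h})\lambda(h)\in\mcal{M}_{0}$, the relations $P_{e}\pi(a)=aP_{e}$, $P_{e}\lambda(h)=P_{h}$ and $P_{h}P_{e}^{*}=\delta_{h,e}I_{\mcal{H}}$ give at once $E(z\lambda(g)^{*})=a_{g}$ (with the convention $a_{g}:=0$ for $g\notin F$), so the assignment $z\mapsto z(g)$ really recovers the $g$-th coefficient on $\mcal{M}_{0}$; and since $E$ is a normal $*$-preserving $\mcal{M}$-bimodule map with $E(\lambda(g)w\lambda(g)^{*})=\alpha_{g}(E(w))$, choosing $w=x\lambda(g)$ gives $E(\lambda(g)x)=\alpha_{g}(x(g^{-1}))$ and hence $x^{*}(g)=E(x^{*}\lambda(g)^{*})=E(\lambda(g)x)^{*}=\alpha_{g}(x(g^{-1})^{*})$.

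The substance of the lemma is item (1), which I would obtain by a Hilbert-space argument. Fix a positive normal functional $\omega$ on $\mcal{M}$ and pass to the GNS Hilbert space $\mcal{H}_{\omega}$ of the positive normal functional $\omega\circ E$ on $\mcal{M}\rtimes_{\alpha}G$, with GNS map $z\mapsto\widehat{z}$; the associated Bures seminorm is then just $z\mapsto\|\widehat{z}\|_{\mcal{H}_{\omega}}$. The identity $E(\lambda(h)^{*}\pi(b)^{*}\pi(a)\lambda(g))=0$ for $g\neq h$ (immediate from $E(\lambda(k))=\delta_{k,e}I_{\mcal{H}}$, the bimodule property of $E$, and the covariance relation) shows that the closed subspaces $\mcal{H}_{\omega}^{(g)}:=\overline{\{\widehat{\pi(a)\lambda(g)}:a\in\mcal{M}\}}$ are mutually orthogonal. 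Applying Kaplansky's density theorem to the $\sigma$-weakly dense $*$-subalgebra $\mcal{M}_{0}$ produces, for each $z\in\mcal{M}\rtimes_{\alpha}G$, a bounded net in $\mcal{M}_{0}$ converging to $z$ in the strong$^{*}$ topology; normality of $\omega\circ E$ promotes this to convergence of the corresponding vectors in $\mcal{H}_{\omega}$, so $\widehat{\mcal{M}_{0}}$ is dense, and as each $\widehat{\pi(a)\lambda(h)}$ lies in $\mcal{H}_{\omega}^{(h)}$ we get $\mcal{H}_{\omega}=\bigoplus_{g\in G}\mcal{H}_{\omega}^{(g)}$. Finally, a short computation with the same identities gives $\langle\widehat{x},\widehat{\pi(a)\lambda(g)}\rangle=\omega(\alpha_{g^{-1}}(a^{*}x(g)))=\langle\widehat{\pi(x(g))\lambda(g)},\widehat{\pi(a)\lambda(g)}\rangle$ for all $a\in\mcal{M}$, so the orthogonal projection of $\widehat{x}$ onto $\mcal{H}_{\omega}^{(g)}$ is $\widehat{\pi(x(g))\lambda(g)}$; therefore $\widehat{x}=\sum_{g}\widehat{\pi(x(g))\lambda(g)}$ in $\mcal{H}_{\omega}$, i.e.\ the Bures seminorm of $x-\sum_{g\in F}\pi(x(g))\lambda(g)$ tends to $0$. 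As $\omega$ ranges over all positive normal functionals this is precisely convergence in $\tau_{\mcal{B}}$, and the same decomposition yields the Parseval identity $E(x^{*}x)=\sum^{\sigma\text{-}\mathrm{weak}}_{g}\alpha_{g^{-1}}(x(g)^{*}x(g))$.

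For the multiplication formula in item (2) I would expand $y=\sum^{\tau_{\mcal{B}}}_{h}\pi(y(h))\lambda(h)$ by item (1) and use that, for fixed $x$ and $g$, the map $z\mapsto E(xz\lambda(g)^{*})$ is $\tau_{\mcal{B}}$-to-$\sigma$-weakly continuous: left multiplication by $x$ obeys $\omega\circ E((xw)^{*}(xw))\leq\|x\|^{2}\,\omega\circ E(w^{*}w)$, right multiplication by the unitary $\lambda(g)^{*}$ replaces $\omega$ by $\omega\circ\alpha_{g}$, and $|\omega(E(w))|\leq\omega(1)^{1/2}(\omega\circ E(w^{*}w))^{1/2}$ by Cauchy--Schwarz for the positive functional $\omega\circ E$. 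Pushing the partial sums of $y$ through this map and evaluating the coefficients of $x\pi(y(h))\lambda(h)$ via the bimodule property $E(\pi(a)w\pi(b))=a\,E(w)\,b$ and the covariance relation $\pi(a)\lambda(k)=\lambda(k)\pi(\alpha_{k^{-1}}(a))$, one obtains $(x\pi(y(h))\lambda(h))(g)=x(gh^{-1})\alpha_{gh^{-1}}(y(h))$ and hence, after reindexing, the asserted formula $(xy)(g)=\sum^{\sigma\text{-}\mathrm{weak}}_{h}x(h)\alpha_{h}(y(h^{-1}g))$.

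The step I expect to be the real obstacle is the passage from $\mcal{M}_{0}$ to a general element of $\mcal{M}\rtimes_{\alpha}G$ in item (1): it is not enough that \emph{some} net from $\mcal{M}_{0}$ converges to $x$, one needs precisely the net of partial Fourier sums to converge, and the only mechanism I see is the orthogonal decomposition $\mcal{H}_{\omega}=\bigoplus_{g}\mcal{H}_{\omega}^{(g)}$, which in turn hinges on the Kaplansky approximation showing that $\widehat{\mcal{M}_{0}}$ already exhausts $\mcal{H}_{\omega}$.
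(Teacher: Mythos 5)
The paper does not prove this lemma at all: it is quoted verbatim from Mercer \cite{M}, so there is no in-paper argument to compare against. Your proposal is, as far as I can check, a correct and essentially complete reconstruction, and it follows the same strategy as Mercer's original proof: coefficient extraction on $\mcal{M}_0$ via $P_eP_e^*$-identities, the GNS space $\mcal{H}_\omega$ of $\omega\circ E$ with the mutually orthogonal subspaces $\mcal{H}_\omega^{(g)}$, Kaplansky density to see that $\widehat{\mcal{M}_0}$ exhausts $\mcal{H}_\omega$, and identification of the orthogonal projection of $\widehat{x}$ onto $\mcal{H}_\omega^{(g)}$ with $\widehat{\pi(x(g))\lambda(g)}$; since $\tau_{\mcal{B}}$-convergence is exactly convergence in each $\|\cdot\|_{\mcal{H}_\omega}$, this gives item (1), and your continuity estimates for $z\mapsto E(xz\lambda(g)^*)$ (contractivity of left multiplication, the twist $\omega\mapsto\omega\circ\alpha_g$ for right multiplication by $\lambda(g)^*$, and Cauchy--Schwarz for $\omega\circ E$) legitimately push the Fourier expansion of $y$ through to the product formula. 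You correctly identified the one genuinely delicate point — that $\sigma$-weak density of $\mcal{M}_0$ alone does not yield convergence of the specific net of partial Fourier sums — and your resolution via the orthogonal decomposition is the right one. Two minor points worth making explicit in a written version: the Bures seminorms should be indexed by \emph{positive} elements of $\mcal{M}_*$ (as you tacitly do), and the passage from strong-$*$ convergence of a bounded Kaplansky net to convergence in $\mcal{H}_\omega$ uses that $(z-z_i)^*(z-z_i)\to 0$ $\sigma$-weakly for bounded nets together with normality of $\omega\circ E$.
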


\begin{lemma}\label{inequality}
Let $\mcal{M}, G$ and $\alpha$ be as above. Then, for every $x\in
\mcal{M}\rtimes_{\alpha}G$, 
\[
\|x(g)\|\leq \|E(x^*x)\|^\frac{1}{2}\leq \|x\|
\]
for all $g\in G$.
\end{lemma}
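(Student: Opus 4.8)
The plan is to prove the two inequalities $\|x(g)\| \le \|E(x^*x)\|^{1/2}$ and $\|E(x^*x)\|^{1/2} \le \|x\|$ separately. The second one is immediate: the map $E(\cdot) = P_e(\cdot)P_e^*$ is a contraction on $B(\mcal{\tilde H})$, since $P_e P_e^* = 1$ (so $P_e$ is a coisometry and $\|P_e\| = \|P_e^*\| = 1$); hence $\|E(x^*x)\| \le \|P_e\|\,\|x^*x\|\,\|P_e^*\| = \|x\|^2$, and taking square roots gives the claim. (Alternatively, one may simply invoke that $\tilde E = \pi\circ E$ is a conditional expectation and $\pi$ is isometric, so $E$ is contractive.)

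For the first inequality I would first put the Fourier coefficient in a more convenient form. Using $x(g) = E(x\lambda(g)^*) = P_e x\lambda(g)^* P_e^*$ together with the identity $P_e\lambda(g) = P_g$ (a special case of $P_g\lambda(h) = P_{gh}$), one gets $x(g) = P_e x P_g^*$. Consequently,
\[
x(g)^* x(g) = P_g x^* P_e^* P_e x P_g^* = P_g x^*\, Q_e\, x\, P_g^*,
\]
where $Q_e = P_e^* P_e$ is an orthogonal projection, so $0 \le Q_e \le 1$ and therefore $x^* Q_e x \le x^* x$. Conjugating this positive inequality by $P_g$ on the left and $P_g^*$ on the right, and then using $P_g (x^*x) P_g^* = P_e \lambda(g)(x^*x)\lambda(g)^* P_e^* = E(\lambda(g)(x^*x)\lambda(g)^*) = \alpha_g\big(E(x^*x)\big)$ (by the properties of $E$ and the $P_g$ recorded above), I obtain
\[
x(g)^* x(g) \le \alpha_g\big(E(x^*x)\big)
\]
in $\mcal M$. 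Taking operator norms and using that $\alpha_g$ is a $*$-automorphism, hence isometric, this yields $\|x(g)\|^2 = \|x(g)^* x(g)\| \le \|\alpha_g(E(x^*x))\| = \|E(x^*x)\|$, which is exactly the desired bound.

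An alternative route, closer in spirit to the computation in \Cref{xg-inequality}, is to expand $E(x^*x) = (x^*x)(e)$ with the product formula for Fourier coefficients: one finds $E(x^*x) = \sum^{\sigma\text{-}\mathrm{weak}}_{h\in G}\alpha_{h^{-1}}\big(x(h)^* x(h)\big)$, a $\sigma$-weakly convergent sum of positive operators, so that $E(x^*x) \ge \alpha_{g^{-1}}(x(g)^* x(g))$ for each $g$, and the conclusion follows as before. The only slightly delicate points — that conjugation by the $P_g$'s preserves the operator order, and (in the second route) that the $\sigma$-weak limit of positive operators is positive — are standard, so I expect no real obstacle here; the one genuine observation is the rewriting $x(g) = P_e x P_g^*$, which reduces everything to an order inequality in $B(\mcal{\tilde H})$.
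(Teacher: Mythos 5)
Your proposal is correct, and your primary argument is genuinely different from the one in the paper. The paper proves only the first inequality, and it does so via Mercer's Fourier calculus: it applies the product formula $(x^*x)(e)=\sum^{\sigma\text{-}\mathrm{weak}}_{h}x^*(h)\alpha_h(x(h^{-1}))$ to get $E(x^*x)=\sum^{\sigma\text{-}\mathrm{weak}}_{h}\alpha_{h^{-1}}(x(h)^*x(h))$ and then drops all but one positive term --- this is exactly your ``alternative route,'' and it is the same computation as in \Cref{xg-inequality} for the twisted $C^*$-crossed product. Your main route instead works spatially: the identity $x(g)=P_e x P_g^*$ reduces everything to the order inequality $x^*Q_e x\le x^*x$ compressed by $P_g$, together with $P_g(x^*x)P_g^*=\alpha_g(E(x^*x))$. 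This buys you independence from the Fourier product formula and from any convergence issues (you only use the elementary properties of the $P_g$'s recorded in the paper's remark), at the cost of the one extra observation $x(g)=P_ExP_g^*$; it also has the merit of explicitly handling the second inequality $\|E(x^*x)\|^{1/2}\le\|x\|$, which the paper's proof leaves implicit. Both arguments are sound and yield the same intermediate estimate $x(g)^*x(g)\le\alpha_g(E(x^*x))$.
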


\begin{proof}
Let $x\in \mcal{M}\rtimes_{\alpha}G$. Then, $x=
\sum^{\tau_{\mcal{B}}}_{g\in G}\pi(x(g))\lambda(g) $. Further,
\begin{eqnarray*}
(x^*x)(e)&=& \sum^{\sigma \text{-weak}}_{h\in
  G}x^*(h)\alpha_{h}(x(h^{-1}))= \sum^{\sigma \text{-weak}}_{h\in
  G}\alpha_{h}(x(h^{-1})^*)\alpha_{h}(x(h^{-1}))\\ &=& \sum^{\sigma
  \text{-weak}}_{h\in G} \alpha_{h^{-1}}(x(h)^*x(h)).
\end{eqnarray*}
Thus, we get  
$$
E(x^*x)= (x^*x)(e)= \sum^{\sigma \text{-}\mathrm{weak}}_{h\in G} \alpha_{h^{-1}}(x(h)^*x(h)),
$$
which implies that
\[
E(x^*x)\geq \alpha_{g^{-1}}(x(g)^*x(g))\,\,\forall\,\,g\in G.
\]
Hence, 
$$
\|E(x^*x)\|\geq \|\alpha_{g^{-1}}(x(g)^*x(g))\|= \|x(g)^*x(g)\|= \|x(g)\|^2\,\, \forall\,g\in G.
$$

\end{proof}

\noindent {\bf Notation:} Let $\mcal{M}, G$ and $\alpha$ be as above and $H$ be a subgroup of
$G$. Consider the unital $*$-subalgebra
\[
\mcal{M}\boxtimes H := \left\{\sum_{g\in G}\pi(a(g))\lambda(g)\,\,|\,\,a: G\rightarrow \mcal{M}\,\text{of finite support and } \,
 a(g)=0 \, \forall\,g \in G\setminus H\right\}
\]
of $\mcal{M}\rtimes_{\alpha} G$. Then, the von Neumann subalgebra of
$\mcal{M}\rtimes_{\alpha} G$ generated by $\pi(\mcal{M})$ and
$\lambda(H)$ equals $(\mcal{M}\boxtimes H)^{''}$, where $\pi$ and
$\lambda$ are as above. Further, it is known that $(\mcal{M}\boxtimes
H)^{''}$ is isomorphic to $\mcal{M}\rtimes_{\alpha_{\restriction_H}}
H$ - see \cite{Z-M}. For convenience, we shall write $\mcal{M}
\rtimes_{\alpha} H$ for $(\mcal{M}\boxtimes H)^{''}$.  (Note that
$\mcal{M}\boxtimes H$ is not a standard notation.)

\begin{theorem}
Let $\mcal{M}, G$ and $\alpha$ be as above and, $H$ and $ K$ be two
distinct subgroups of $G$. Then,
\[
d_{KK}(\mcal{M} \rtimes_{\alpha}H, \mcal{M} \rtimes_{\alpha}K)=1=
d_{0}(\mcal{M} \rtimes_{\alpha}H, \mcal{M} \rtimes_{\alpha}K)
\]
in $\mcal{M} \rtimes_{\alpha}G$.
\end{theorem}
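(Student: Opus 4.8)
The plan is to transcribe the proof of \Cref{sg-distance} to the crossed-product von Neumann setting. By \Cref{KK-facts}(1) and \Cref{d0-basics}(3) one already has $d_{0}(\mcal{M}\rtimes_{\alpha}H,\mcal{M}\rtimes_{\alpha}K)\le d_{KK}(\mcal{M}\rtimes_{\alpha}H,\mcal{M}\rtimes_{\alpha}K)\le 1$, so the entire content is the reverse bound $d_{0}(\mcal{M}\rtimes_{\alpha}H,\mcal{M}\rtimes_{\alpha}K)\ge 1$; granting it, the chain $1\le d_{0}\le d_{KK}\le 1$ forces both distances to equal $1$.

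To prove $d_{0}\ge 1$: since $H$ and $K$ are distinct, after interchanging them if necessary we may assume $H\ne H\cap K$, and we fix $h\in H\setminus(H\cap K)$. The unitary $\lambda(h)$ lies in $B_{1}(\mcal{M}\rtimes_{\alpha}H)$, and I will show $\|\lambda(h)-x\|\ge 1$ for every $x\in\mcal{M}\rtimes_{\alpha}K$. This means $\lambda(h)$ is at distance at least $1$ from $\mcal{M}\rtimes_{\alpha}K$, so $\mcal{M}\rtimes_{\alpha}H\nsubseteq_{\beta}\mcal{M}\rtimes_{\alpha}K$ for every $\beta<1$; by the definition of $d_{0}$ this gives $d_{0}(\mcal{M}\rtimes_{\alpha}H,\mcal{M}\rtimes_{\alpha}K)\ge 1$.

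For the estimate I will look at the Fourier coefficient at $h$. Since $(\lambda(h)-x)(h)=E\bigl((\lambda(h)-x)\lambda(h)^{*}\bigr)$ lies in $\mcal{M}$ and \Cref{inequality} gives $\|y(h)\|\le\|y\|$ for every $y\in\mcal{M}\rtimes_{\alpha}G$, it is enough to verify that $(\lambda(h)-x)(h)=1_{\mcal{M}}$. On the one hand $(\lambda(h))(h)=E(\lambda(h)\lambda(h)^{*})=E(1)=1_{\mcal{M}}$, because $E$ is a unital conditional expectation. On the other hand $x(h)=0$ for every $x\in\mcal{M}\rtimes_{\alpha}K$: for $x\in\mcal{M}\boxtimes K$ this is a one-line computation with the operators $P_{g}$ (using $P_{e}\pi(a)=aP_{e}$, $P_{e}\lambda(m)=P_{m}$ and $P_{m}P_{e}^{*}=\delta_{m,e}1$, so that $E(\pi(a)\lambda(k)\lambda(h)^{*})=E(\pi(a)\lambda(kh^{-1}))=0$ for $k\in K$, as $h\notin K$); and it then extends to all of $\mcal{M}\rtimes_{\alpha}K=(\mcal{M}\boxtimes K)''$, since for fixed $\xi,\eta\in\mcal{H}$ the functional $x\mapsto\langle x(h)\xi,\eta\rangle=\langle x\,\lambda(h)^{*}P_{e}^{*}\xi,\ P_{e}^{*}\eta\rangle$ is weakly continuous and $\mcal{M}\boxtimes K$ is weakly dense in its bicommutant. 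Hence $(\lambda(h)-x)(h)=1_{\mcal{M}}$, and \Cref{inequality} yields $\|\lambda(h)-x\|\ge\|1_{\mcal{M}}\|=1$.

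As in \Cref{sg-distance}, the computation is routine; the only step that needs a little attention, and the only place where the von Neumann algebraic (rather than a merely dense $*$-algebra) structure is used, is the passage from $\mcal{M}\boxtimes K$ to its weak closure $\mcal{M}\rtimes_{\alpha}K$ in verifying $x(h)=0$ — which is handled by the weak continuity of the Fourier-coefficient vector functionals above.
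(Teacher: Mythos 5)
Your proposal is correct and follows essentially the same route as the paper: pick $h\in H\setminus K$, show the Fourier coefficient of $\lambda(h)-x$ at $h$ equals the identity, and invoke \Cref{inequality} to get $\|\lambda(h)-x\|\ge 1$, whence $d_0\ge 1$ and both distances equal $1$. The only cosmetic difference is that the paper verifies $x(h)=0$ by plugging in the Bures-convergent Fourier expansion $x=\sum^{\tau_{\mcal{B}}}_{k\in K}\pi(x(k))\lambda(k)$ of an element of $\mcal{M}\rtimes_{\alpha}K$, while you obtain it from $\mcal{M}\boxtimes K$ by the weak continuity of $x\mapsto\langle x\,\lambda(h)^{*}P_{e}^{*}\xi,P_{e}^{*}\eta\rangle$ --- both are fine.
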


\begin{proof}
Without loss of generality, assume that $H\setminus K \neq
\emptyset$. Let $h\in H \setminus K$. Then, $\lambda(h)\in
B_1(\mcal{M} \rtimes_{\alpha}H)$ and for any $y\in \mcal{M}
\rtimes_{\alpha}K$, we get
\begin{eqnarray*}
\|\lambda(h)-y\|^2&=& \|\lambda(h)- \sum^{\tau_{\mcal{B}}}_{k\in
  K}\pi(y(k))\lambda(k)\|^2 \\ &\geq&\| E(z^*z)\| \geq \|z(h)\|^2
\,\,\,\text{(by \Cref{inequality})},
\end{eqnarray*}
where $z= \lambda(h)- \sum^{\tau_{\mcal{B}}}_{k\in K}\pi(y(k))\lambda(k)$. Note that
\begin{eqnarray*}
z(h)= E(z\lambda(h)^*)&=& E((\lambda(h)- \sum^{\tau_{\mcal{B}}}_{k\in K}\pi(y(k))\lambda(k))\lambda(h)^*)\\
&=& E(\lambda(e)- \sum^{\tau_{\mcal{B}}}_{k\in K}\pi(y(k))\lambda(kh^{-1}))\\
&=& P_e(\lambda(e)- \sum^{\tau_{\mcal{B}}}_{k\in K}\pi(y(k))\lambda(kh^{-1}))P^{*}_e\\
&=& P_eP^{*}_e\\
& = & \mathrm{id}_{\mcal{H}}. 
\end{eqnarray*}
This implies that $\|\lambda(h)-y\|^2\geq 1$ for all $h \in H \setminus K$ and $y \in \mcal{M}\rtimes_{\alpha} K$, which shows that $\mcal{M}
\rtimes_{\alpha}H \nsubseteq_{\gamma} \mcal{M} \rtimes_{\alpha}K$ whenever $0 < \gamma < 1$. Thus, if  $\mcal{M}
\rtimes_{\alpha}H \subseteq_{\gamma} \mcal{M} \rtimes_{\alpha}K$, then 
$\gamma$ must be $ \geq 1$, which implies that 
 \[
 d_{0}(\mcal{M} \rtimes_{\alpha}H, \mcal{M} \rtimes_{\alpha}K)\geq 1.
 \]
Hence, by \Cref{d0-basics}(1), (3), it follows that
\[
d_{0}(\mcal{M} \rtimes_{\alpha}H, \mcal{M}
\rtimes_{\alpha}K)=1=d_{KK}(\mcal{M} \rtimes_{\alpha}H, \mcal{M}
\rtimes_{\alpha}K).
\]

\end{proof}

\begin{remark}\label{invariance}
Suppose that $ G$ and $ \alpha$ are as above and $\mcal{M}$ is a
finite von Neumann algebra with a faithul normal $G$-invariant tracial
state $\tau$. Then, it is known that $\mcal{M} \rtimes_{\alpha}G$ is a
finite von Neumann algebra with a faithful normal tracial state
$\tilde{\tau}$ - see \cite[Proposition 1.3.2]{JS}.  Further, by applying
\Cref{relation}, it is seen that
$$\tau \circ
\tilde{E}(z)= \tilde{\tau}(z)
$$ 
for all $z\in \mcal{M} \rtimes_{\alpha}G$.
\end{remark}

\begin{theorem}
Let $\mcal{M}, G$ and $\alpha$ be as in \Cref{invariance}. If $H$ and $
K$ are two distinct subgroups of $G$, then
\[
d_{MT}(\mcal{M} \rtimes_{\alpha}H, \mcal{M} \rtimes_{\alpha}K)=1.
\]
\end{theorem}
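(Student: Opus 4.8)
The plan is to prove the two bounds $d_{MT}\le 1$ and $d_{MT}\ge 1$ separately. The upper bound is immediate: by \Cref{SOT}(2) and \Cref{KK-facts}(1),
\[
d_{MT}(\mcal{M}\rtimes_{\alpha}H,\mcal{M}\rtimes_{\alpha}K)\le d_{KK}(\mcal{M}\rtimes_{\alpha}H,\mcal{M}\rtimes_{\alpha}K)\le 1 .
\]
The point worth flagging here is that the preceding theorem (which gives $d_{KK}=1$) does \emph{not} directly settle the lower bound, because $\|\cdot\|_{\tilde{\tau}}\le\|\cdot\|$ runs in the wrong direction; the $\|\cdot\|_{\tilde{\tau}}$-estimate has to be carried out afresh, and it will succeed precisely because the relevant Fourier coefficient of the difference turns out to be a full unit, whose $\tilde{\tau}$-value is exactly $1$.

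For the lower bound I would first pass to the dense unital $*$-subalgebras. Since $(\mcal{M}\boxtimes H)''=\mcal{M}\rtimes_{\alpha}H$ and $(\mcal{M}\boxtimes K)''=\mcal{M}\rtimes_{\alpha}K$, \Cref{SOT}(1) gives
\[
d_{MT}(\mcal{M}\rtimes_{\alpha}H,\mcal{M}\rtimes_{\alpha}K)=d_{MT}(\mcal{M}\boxtimes H,\mcal{M}\boxtimes K),
\]
so it suffices to bound the latter below by $1$. Assuming without loss of generality that $H\setminus K\neq\emptyset$ (otherwise $K\setminus H\neq\emptyset$ and one interchanges the roles of $H$ and $K$), I would fix $h\in H\setminus K$ and consider the unitary $\lambda(h)=\pi(1)\lambda(h)\in B_1(\mcal{M}\boxtimes H)$. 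The aim is to show $\|\lambda(h)-y\|_{\tilde{\tau}}\ge 1$ for every $y\in B_1(\mcal{M}\boxtimes K)$; this yields $d\big(\widehat{\lambda(h)},\widehat{B_1(\mcal{M}\boxtimes K)}\big)\ge 1$ in $L^2(\mcal{M}\rtimes_{\alpha}G,\tilde{\tau})$, hence $d_{MT}(\mcal{M}\boxtimes H,\mcal{M}\boxtimes K)\ge 1$, and together with the upper bound this finishes the proof.

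The estimate itself recycles the two ingredients used in the proof of the previous theorem. Write $y=\sum_{k\in F}\pi(y(k))\lambda(k)$ with $F$ a finite subset of $K$ and set $z=\lambda(h)-y$. Because $h\notin K$ forces $kh^{-1}\neq e$ for every $k\in F$, the same computation as in that proof gives $z(h)=E(z\lambda(h)^*)=P_eP_e^*=\mathrm{id}_{\mcal{H}}$, i.e.\ the unit of $\mcal{M}$. On the other hand, the identity established in the proof of \Cref{inequality} (here a genuinely finite sum) gives
\[
E(z^*z)=(z^*z)(e)=\sum_{g\in G}\alpha_{g^{-1}}\big(z(g)^*z(g)\big)\ \ge\ \alpha_{h^{-1}}\big(z(h)^*z(h)\big)=\alpha_{h^{-1}}(1)=1 .
\]
Since $\tilde{\tau}=\tau\circ\tilde{E}$ by \Cref{invariance} and $\tau$ is positive, monotonicity gives
\[
\|z\|_{\tilde{\tau}}^{2}=\tilde{\tau}(z^*z)=\tau\big(E(z^*z)\big)\ \ge\ \tau(1)=1 ,
\]
as required. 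The steps needing the most care are the correct evaluation of $z(h)$ — which, exactly as in the Kadison--Kastler calculation, rests entirely on $h\notin K$ — and keeping the normalisations straight ($\tilde{\tau}=\tau\circ\tilde{E}$ and $\tilde{\tau}(1)=1$); neither is a genuine obstacle, so I expect the argument to be short.
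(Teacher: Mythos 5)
Your proposal is correct and follows essentially the same route as the paper: reduce to $d_{MT}(\mcal{M}\boxtimes H,\mcal{M}\boxtimes K)\geq 1$ via \Cref{SOT}(1), pick $h\in H\setminus K$, compute $z(h)=\mathrm{id}_{\mcal{H}}$ for $z=\lambda(h)-y$, and combine $E(z^*z)\geq\alpha_{h^{-1}}(z(h)^*z(h))$ with $\tilde{\tau}=\tau\circ\tilde{E}$ to get $\|\widehat{\lambda(h)}-\widehat{y}\|_{\tilde{\tau}}\geq 1$. Your explicit remarks about the upper bound and about why the operator-norm theorem does not directly yield the $\|\cdot\|_{\tilde{\tau}}$ lower bound are accurate but only make explicit what the paper leaves implicit.
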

\begin{proof}
By \Cref{SOT} (1), it is enough to show that $d_{MT}(\mcal{M}\boxtimes
H, \mcal{M}\boxtimes K)\geq 1$.

As $H$ and $K$ are distinct subgroups of $G$, without loss of
generality, assume that $H\setminus K \neq \emptyset$. Let $h\in H
\setminus K$. Then, $\widehat{\lambda(h)}\in \widehat{B_1(\mcal{M}
  \boxtimes H)}$ and for any ${y}= {\sum_{k\in
    K}\pi(y(k))\lambda(k)} \in B_1(\mcal{M} \boxtimes K)$, we
get
\begin{eqnarray*}
\|\widehat{\lambda(h)}-\widehat{y}\|^{2}_{\tilde{\tau}}&=&
\tilde{\tau}((\lambda(h)-y)^*(\lambda(h)-y))\\ &=& \tau\circ
\tilde{E}((\lambda(h)-y)^*(\lambda(h)-y))\,\,\, \, \,(\text{by
  \Cref{invariance}})\\ &\geq&\tau\circ\pi(\alpha_{h^{-1}}((z(h)^*z(h)))\,\,\,
\, \,(\text{see the proof of \Cref{inequality}})\\ &=& \tau \circ \pi
(\alpha_{h^{-1}}(\mathrm{id}_{\mcal{H}}))= 1,
\end{eqnarray*}
where $z= \lambda(h)-\sum_{k\in K}\pi(y(k))\lambda(k)$.  This implies
that $\|\widehat{\lambda(h)}-\widehat{y}\|^{2}_{\tilde{\tau}}\geq 1$
for all $\widehat{y}\in \widehat{B_1(\mcal{M} \boxtimes K})$. Hence,
\[
\sup_{\widehat{x}\in \widehat{B_1(\mcal{M} \boxtimes
    H)}}d(\widehat{x}, \widehat{B_1(\mcal{M} \boxtimes K}))\geq
d(\widehat{\lambda(h)}, \widehat{B_1(\mcal{M} \boxtimes K})\geq 1.
\]
Thus, $d_{MT}(\mcal{M} \boxtimes H, \mcal{M} \boxtimes K)\geq 1$, and
we are done.
\end{proof}

\noindent{{\bf Acknowledgements:} I would like to sincerely thank my
  supervisor, Dr.~Ved Prakash Gupta, for his valuable feedback and
  suggestions on this article.  }

\end{document}